\title{Algebraic setup of non-strict multiple zeta values}
\author{Shuichi Muneta}
\date{}
\font\xiirm=cmr12 \font\xrm=wncyr10
\newcommand{\sh}{\xrm \mbox{sh}\, \xiirm}
\newcommand{\sv}{\,\overline{\sh}\,}
\newcommand{\hv}{\,\overline{*}\,}
\newcommand{\zs}{\overline{\zeta}}
\newcommand{\cv}{\,\overline{\circ}\,}
\begin{document}
\maketitle

\newtheorem{thm}{Theorem}[section]
\newtheorem{lem}[thm]{Lemma}
\newtheorem{prop}[thm]{Proposition}

\section{Introduction}
The multiple zeta values and non-strict multiple zeta values\footnote[0]{2000 {\it Mathematics Subject
Classification}. Primary 11M41.} (MZVs and NMZVs, for short) 
are defined respectively by 
\[\zeta(k_{1},k_{2},\ldots,k_{n})
 :=\sum_{m_{1}>m_{2}>\cdots>m_{n}>0}\frac{1}{m_{1}^{k_{1}}m_{2}^{k_{2}}{\cdots}m_{n}^{k_{n}}},\]
\[\zs (k_{1},k_{2},\ldots,k_{n})
 :=\sum_{m_{1} \geq m_{2} \geq \cdots \geq m_{n}>0} \frac{1} {m_{1}^{k_{1}}m_{2}^{k_{2}} \cdots m_{n}^{k_{n}}},\]
where $k_{1},k_{2},\ldots,k_{n}$ are positive integers and $k_{1}\geq 2$. 
Considerable amount of work on MZVs has been done from various aspects and interests. 

The MZVs have many relations among them 
(duality formula, sum formula, Hoffman's relations, Ohno's relations, 
derivation relations and cyclic sum relations, cf. [H1], [HO], [IKZ], [O]) 
and these relations can be described in purely algebraic manner (cf. [IKZ]). 
On the other hand, NMZVs have not been investigated so much compared to MZVs. 
But recently, a few works on NMZVs have appeared ([AO], [OW]) and they indicate that 
NMZVs possess similar properties to MZVs.  

In this article, we introduce an algebraic setup of NMZVs and prove some relations of NMZVs, 
which are analogous to Hoffman's relations of MZVs, by using this algebraic setup of NMZVs. 

\section{Algebraic setup of NMZVs}
\subsection{Algebraic setup of MZVs}
We summarize the algebraic setup of MZVs introduced by Hoffman (cf. [H2], [IKZ]). 
Let $ \mathfrak{H} = \mathbb{Q} \left\langle x,y \right\rangle$ be 
the noncommutative polynomial ring in two indeterminates $x$, $y$ 
and $\mathfrak{H}^1$ and $\mathfrak{H}^0$ its subrings $\mathbb{Q} + \mathfrak{H}y$ 
and $\mathbb{Q} + x \mathfrak{H} y$. 
We set $z_{k} = x^{k-1} y$ $(k = 1,2,3,\ldots)$. Then $\mathfrak{H}^1$ 
is freely generated by $\{z_{k}\}_{k \geq 1}$. 
For any word $w$, let $l(w)$ be the degree of $w$ with respect to $y$, 
and $\left| w \right|$ the total degree.

We define the $\mathbb{Q}$-linear map (called evaluation map) $Z:\mathfrak{H}^0 \longrightarrow \mathbb{R}$ by
\[Z(1)=1 \;\; \mathrm{and} \;\;  Z( z_{k_{1}} z_{k_{2}} \cdots z_{k_{n}}) = \zeta( k_{1}, k_{2}, \ldots, k_{n}).\]

We next define two products of MZVs. 
The one is the harmonic product $*$ on $\mathfrak{H}^1$ defined by
\begin{eqnarray*}
 1*w &=& w*1 \;=\; w , \\
 z_{k} w_{1} * z_{l} w_{2} 
 &=& z_{k} (w_{1} * z_{l} w_{2}) + z_{l} (z_{k} w_{1} * w_{2}) + z_{k+l} (w_{1} * w_{2}) 
\end{eqnarray*}
($k,l \in \mathbb{Z}_{\geq 1}$ and $w$, $w_{1}$, $w_{2}$ are words in $\mathfrak{H}^1$), 
together with $\mathbb{Q}$-bilinearity. 
The harmonic product $*$ is commutative and associative, 
therefore $\mathfrak{H}^1$ is $\mathbb{Q}$-commutative algebra with respect to $*$. 
We denote it by $\mathfrak{H}^{1}_{*}$. 
The subset $\mathfrak{H}^0$ is a subalgebra of $\mathfrak{H}^1$ 
with respect to $*$ and we denote it by $\mathfrak{H}^{0}_{*}$. 
We then have 
\[Z(w_{1} * w_{2}) = Z(w_{1}) Z(w_{2}) \]
for any $w_{1}, w_{2} \in \mathfrak{H}^0$. 
The other product is the shuffle product $\sh$ on $\mathfrak{H}$ defined by
\begin{eqnarray*}
 1 \sh w &=& w \sh 1 \;=\; w , \\
 u_{1} w_{1} \sh u_{2} w_{2} &=& u_{1} (w_{1} \sh u_{2} w_{2}) + u_{2} (u_{1} w_{1} \sh w_{2})  
\end{eqnarray*}
($u_{1},u_{2} \in \{ x,y \}$ and $w$, $w_{1}$, $w_{2}$ are words in $\mathfrak{H}$), together with $\mathbb{Q}$-bilinearity. 
The shuffle product $\sh$ is also commutative and associative, 
therefore $\mathfrak{H}$ is $\mathbb{Q}$-commutative algebra with respect to $\sh$. 
We denote it by $\mathfrak{H}_{\sh}$. The subsets $\mathfrak{H}^1$ and $\mathfrak{H}^0$ 
are subalgebras of $\mathfrak{H}$ with respect to $\sh$ 
and we denote them by $\mathfrak{H}^{1}_{\sh}$, $\mathfrak{H}^{0}_{\sh}$ respectively. 
For this product, we also have 
\[ Z(w_{1} \sh w_{2}) = Z(w_{1}) Z(w_{2}) \]
for any $w_{1}, w_{2} \in \mathfrak{H}^0$.

The finite double shuffle relations of MZVs is then 
\[Z( w_{1} * w_{2} - w_{1} \sh w_{2}) = 0 \quad  (w_{1}, w_{2} \in \mathfrak{H}^0).\]

The evaluation map is generalized by the following proposition to get the extended double shuffle relations of MZVs. 

\begin{prop}[{[}IKZ{]}]
We have two algebra homomorphisms
\[ Z^{*}:\mathfrak{H}^{1}_{*} \longrightarrow \mathbb{R}[T] 
 \quad and \quad
   Z^{\sh}:\mathfrak{H}^{1}_{\sh} \longrightarrow \mathbb{R}[T]\]
which are uniquely characterized by the properties that they both extend the evaluation map 
$Z:\mathfrak{H}^0 \longrightarrow \mathbb{R}$ and send $y$ to $T$.
\end{prop}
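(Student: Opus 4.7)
The plan is to reduce the proposition to the universal property of a polynomial ring. The decisive structural fact I need is that, under each of the two products, $\mathfrak{H}^{1}$ is the polynomial ring in one variable over $\mathfrak{H}^{0}$:
\[
\mathfrak{H}^{1}_{*} \;\cong\; \mathfrak{H}^{0}_{*}[y],
\qquad
\mathfrak{H}^{1}_{\sh} \;\cong\; \mathfrak{H}^{0}_{\sh}[y],
\]
where the right-hand sides are ordinary polynomial rings over the commutative $\mathbb{Q}$-algebras $\mathfrak{H}^{0}_{\bullet}$. Recalling from the excerpt that $Z:\mathfrak{H}^{0}\to\mathbb{R}$ is a $\mathbb{Q}$-algebra homomorphism for each of $*$ and $\sh$, the universal property will then produce, for each product, a unique $\mathbb{Q}$-algebra map $\mathfrak{H}^{1}_{\bullet}\to\mathbb{R}[T]$ extending $Z$ and sending $y$ to $T$; these are the desired $Z^{*}$ and $Z^{\sh}$. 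Uniqueness of $Z^{\bullet}$ itself is then automatic from uniqueness in the universal property, together with the observation that $\mathfrak{H}^{0}$ and $y$ generate $\mathfrak{H}^{1}_{\bullet}$ as an algebra.

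To establish the structural fact, in either case I would introduce the $\mathbb{Q}$-algebra homomorphism
\[
\Phi_{\bullet}:\mathfrak{H}^{0}_{\bullet}[Y]\longrightarrow \mathfrak{H}^{1}_{\bullet},\qquad Y\longmapsto y,
\]
extending the inclusion $\mathfrak{H}^{0}\hookrightarrow\mathfrak{H}^{1}$, and show it is bijective. Surjectivity goes by induction on the number of leading $y$'s of a word $w\in\mathfrak{H}^{1}$: in the shuffle case one exploits that $u\sh y^{\sh j}$, for $u\in\mathfrak{H}^{0}$ or $u=1$, equals $j!\,y^{j}u$ plus words with strictly fewer initial $y$'s, so the induction peels off leading $y$'s one layer at a time; in the harmonic case the recursion $z_{k}w*y=z_{k}(w*y)+y\,z_{k}w+z_{k+1}w$ plays the analogous role upon iteration.

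The main obstacle I anticipate is not surjectivity, which is a direct unwinding of the product recursions, but rather injectivity, i.e.\ the algebraic independence of $y$ over $\mathfrak{H}^{0}_{\bullet}$. This is cleanest handled by a Hilbert-series comparison: equip both $\mathfrak{H}^{0}_{\bullet}[Y]$ and $\mathfrak{H}^{1}_{\bullet}$ with the grading by total degree $|\cdot|$ (with $Y$ in degree $1$), verify that the two Poincar\'e series coincide, and combine this with surjectivity to force $\Phi_{\bullet}$ to be an isomorphism. An alternative, more hands-on route is a leading-term/filtration argument based on the number of initial $y$'s, which exhibits an explicit left inverse of $\Phi_{\bullet}$ and simultaneously provides, for each $w\in\mathfrak{H}^{1}$, the unique decomposition $w=\sum_{i\geq 0}w_{i}\bullet y^{\bullet i}$ with $w_{i}\in\mathfrak{H}^{0}$ that underlies the whole construction.
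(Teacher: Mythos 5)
Your proposal is correct and follows essentially the route the paper itself relies on: the paper states this proposition without proof (citing [IKZ]), but the structural isomorphism you isolate, $\mathfrak{H}^{1}_{\circ}\simeq\mathfrak{H}^{0}_{\circ}[y]$ for $\circ=*$ or $\sh$, is precisely the fact the paper invokes (from [H2] and [R]) in Lemma 2.10 and in its proof of the NMZV analogue, Proposition 2.11, after which the universal property of the polynomial ring gives existence and uniqueness of $Z^{*}$ and $Z^{\sh}$ exactly as you describe. Both your surjectivity induction on the number of leading $y$'s and the Poincar\'e-series comparison for injectivity (both products being graded by total degree) are sound, so nothing further is needed.
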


Then we have the extended double shuffle relations of MZVs. 
\begin{thm}[{[}IKZ{]}]
For any $w_{1} \in \mathfrak{H}^1$ and $w_{2} \in \mathfrak{H}^0$, we have 
\[Z^{*} ( w_{1} \sh w_{2} - w_{1} * w_{2} ) = 0
 \quad and \quad 
 Z^{\sh} ( w_{1} \sh w_{2} - w_{1} * w_{2} ) = 0.\]
\label{2.2}
\end{thm}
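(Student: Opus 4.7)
The strategy is to reduce the two identities to finitely many instances of the finite double shuffle relation $Z(a \sh b - a * b) = 0$ (valid for $a, b \in \mathfrak{H}^{0}$) via the polynomial algebra structure underlying Proposition 2.1: namely, $\mathfrak{H}^{1}_{*}$ is a polynomial algebra over $\mathfrak{H}^{0}_{*}$ with $y$ as a free variable, and analogously $\mathfrak{H}^{1}_{\sh}$ over $\mathfrak{H}^{0}_{\sh}$. This structural fact is precisely what makes $Z^{*}$ and $Z^{\sh}$ well defined.

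Fix $w_{1} \in \mathfrak{H}^{1}$ and $w_{2} \in \mathfrak{H}^{0}$. Denote the $k$-fold harmonic and shuffle powers of $y$ by $y^{(k)}_{*} := \underbrace{y * \cdots * y}_{k}$ and $y^{(k)}_{\sh} := \underbrace{y \sh \cdots \sh y}_{k}$ respectively. Writing the two canonical polynomial expansions
\[ w_{1} = \sum_{k \geq 0} u_{k} * y^{(k)}_{*} = \sum_{j \geq 0} v_{j} \sh y^{(j)}_{\sh}, \qquad u_{k}, v_{j} \in \mathfrak{H}^{0}, \]
one reads off $Z^{*}(w_{1}) = \sum_{k} Z(u_{k}) T^{k}$ and $Z^{\sh}(w_{1}) = \sum_{j} Z(v_{j}) T^{j}$. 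Since $Z^{*}$ is a $*$-algebra homomorphism with $Z^{*}|_{\mathfrak{H}^{0}} = Z$, we get $Z^{*}(w_{1} * w_{2}) = Z^{*}(w_{1}) Z(w_{2})$, and symmetrically $Z^{\sh}(w_{1} \sh w_{2}) = Z^{\sh}(w_{1}) Z(w_{2})$. The theorem is therefore equivalent to the two cross-identities
\[ Z^{*}(w_{1} \sh w_{2}) = Z^{*}(w_{1}) Z(w_{2}), \qquad Z^{\sh}(w_{1} * w_{2}) = Z^{\sh}(w_{1}) Z(w_{2}). \]
I would prove each by expanding the non-native product, for example $w_{1} \sh w_{2} = \sum_{k} (u_{k} * y^{(k)}_{*}) \sh w_{2}$, and rewriting each summand back into the native $*$-polynomial basis in $y$ with $\mathfrak{H}^{0}$-coefficients. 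The correction terms generated by this rewriting are all of the form $a \sh b - a * b$ with $a, b \in \mathfrak{H}^{0}$, and so vanish under $Z$.

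The main obstacle is the non-distributivity of $\sh$ over $*$ (and vice versa), which makes the commutation $(u_{k} * y^{(k)}_{*}) \sh w_{2} \leftrightarrow (u_{k} \sh w_{2}) * y^{(k)}_{*}$ delicate. A clean device for handling it is to pass to the generating series $E_{*}(u) = \sum_{k \geq 0} y^{(k)}_{*}\, u^{k}/k!$ and $E_{\sh}(u) = \sum_{k \geq 0} y^{(k)}_{\sh}\, u^{k}/k!$, which admit a closed-form relationship in $\mathfrak{H}^{1}[[u]]$ linearising the transition between harmonic and shuffle polynomial bases in $y$. The cross-identities then fall out by comparing coefficients of $u^{k}$ after evaluation by $Z^{*}$ or $Z^{\sh}$ and repeatedly invoking the finite double shuffle relation on the resulting $\mathfrak{H}^{0}$-coefficients; organising this bookkeeping is the technical heart of the argument.
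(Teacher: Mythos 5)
The paper gives no proof of this theorem: it is quoted verbatim from [IKZ], so the only comparison available is with the argument there.

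Your opening reduction is sound: since $Z^{*}$ is a $*$-homomorphism extending $Z$ and $Z^{*}(w_{2})=Z(w_{2})$ is a constant for $w_{2}\in\mathfrak{H}^{0}$, the theorem is equivalent to the two cross-identities $Z^{*}(w_{1}\sh w_{2})=Z^{*}(w_{1})Z(w_{2})$ and $Z^{\sh}(w_{1}*w_{2})=Z^{\sh}(w_{1})Z(w_{2})$. The gap is your central claim that rewriting the non-native product into the native polynomial basis produces correction terms all of the form $a\sh b-a*b$ with $a,b\in\mathfrak{H}^{0}$. That would make the extended double shuffle relations a formal consequence of the finite ones, and this is false. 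Take the smallest case $w_{1}=y$, $w_{2}=z_{2}=xy$. One computes $y\sh z_{2}=z_{1}z_{2}+2z_{2}z_{1}$, and rewriting in the $*$-basis via $z_{2}*y=z_{1}z_{2}+z_{2}z_{1}+z_{3}$ gives $y\sh z_{2}=z_{2}*y+(z_{2}z_{1}-z_{3})$, so the theorem for this pair asserts exactly $\zeta(2,1)-\zeta(3)=0$. This is a weight-$3$ relation, while the finite double shuffle relation $Z(a\sh b-a*b)=0$ has no nontrivial instances in weight $3$ at all: both $a$ and $b$ would have to have weight at least $2$, hence total weight at least $4$. So no bookkeeping with correction terms of the shape you describe can close the argument; the correction terms genuinely lie outside the span of finite double shuffle.

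The missing ingredient is analytic, and it is the heart of the [IKZ] proof. One compares the asymptotics of the truncated harmonic sums $Z_{M}(w)$ (a polynomial in $\log M+\gamma$ up to $o(1)$) with those of the multiple polylogarithms $\mathrm{Li}_{w}(z)$ as $z\to 1$ (a polynomial in $-\log(1-z)$ up to controlled error). This yields $Z^{\sh}=\rho\circ Z^{*}$ for an explicit $\mathbb{R}$-linear automorphism $\rho$ of $\mathbb{R}[T]$ whose coefficients are built from the real numbers $\zeta(n)$, $n\geq 2$; the two identities then follow by applying $\rho^{\pm1}$ to the native-product identities and using that $\rho$ fixes constants. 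Relatedly, your generating series $E_{*}(u)$ and $E_{\sh}(u)$ are not linked by a closed-form relation inside $\mathfrak{H}^{1}[[u]]$ with $\mathfrak{H}^{0}$-coefficients that FDS could dispose of; the usable relation appears only after applying the evaluation maps, and its coefficients are precisely the transcendental constants your purely algebraic plan omits.
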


\subsection{Algebraic setup of NMZVs}
In this subsection, we introduce the algebraic setup of NMZVs. 
Define $\mathbb{Q}$-linear map $\overline{Z}:\mathfrak{H}^0 \longrightarrow \mathbb{R}$ by
\[ \overline{Z}(1) = 1 
\quad \mathrm{and} \quad 
\overline{Z}(z_{k_{1}} z_{k_{2}} \cdots z_{k_{n}}) = \zs (k_{1}, k_{2}, \ldots, k_{n}).\]
We call this map n-evaluation map. 
We next define the n-harmonic product $\hv$ on $\mathfrak{H}^1$, 
which is the NMZV-couterpart of the harmonic product $*$, inductively by
\begin{eqnarray*}
 1 \hv w &=& w \hv 1 \;=\; w , \\
 z_{k} w_{1} \hv z_{l} w_{2} &=& z_{k} (w_{1} \hv z_{l} w_{2}) + z_{l} (z_{k} w_{1} \hv w_{2}) - z_{k+l} (w_{1} \hv w_{2}) 
\end{eqnarray*}
($k,l \in \mathbb{Z}_{\geq 1}$ and $w$, $w_{1}$, $w_{2}$ are words in $\mathfrak{H}^1$), together with $\mathbb{Q}$-bilinearity. 
The n-harmonic product $\hv$ has the following properties. 

\begin{prop}\label{2.3}
The n-harmonic product $\hv$ is commutative and associative. 
\end{prop}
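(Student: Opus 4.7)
Both properties will be established by induction on the combined length $\ell(a)+\ell(b)$ (resp.\ $\ell(a)+\ell(b)+\ell(c)$) of the inputs, where $\ell(w)$ denotes the number of generators $z_k$ occurring in the word $w$. Since $\hv$ is $\mathbb{Q}$-bilinear, it suffices to check both identities on monomials.

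\textbf{Commutativity.} The base case, where one of $a,b$ equals $1$, is immediate from $1 \hv w = w \hv 1 = w$. For the inductive step, write $a = z_k w_1$ and $b = z_l w_2$. Applying the defining recursion once to each side, one obtains
\[
z_k w_1 \hv z_l w_2 = z_k (w_1 \hv z_l w_2) + z_l (z_k w_1 \hv w_2) - z_{k+l}(w_1 \hv w_2),
\]
\[
z_l w_2 \hv z_k w_1 = z_l (w_2 \hv z_k w_1) + z_k (z_l w_2 \hv w_1) - z_{l+k}(w_2 \hv w_1).
\]
The induction hypothesis applied to each pair of shorter words (which are all strictly shorter than $a,b$) interchanges the two factors inside the parentheses, matching the two expressions term by term.

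\textbf{Associativity.} Again the base case, where one of $a,b,c$ equals $1$, is immediate. For the inductive step, set $a = z_k A$, $b = z_l B$, $c = z_m C$ and expand $(a\hv b)\hv c$ by applying the definition first to $a \hv b$ and then to each of the three resulting products against $z_m C$; this produces nine terms. Similarly expand $a \hv (b \hv c)$. Of the nine terms on the left, six begin with one of the single letters $z_k$, $z_l$, $z_{k+l}$ (with signs $+,+,-$), and three begin with $z_m$, $z_{k+m}$, $z_{l+m}$; analogously on the right with roles of $(k,l)$ swapped with $(l,m)$. The induction hypothesis (associativity for triples of strictly smaller combined length) lets one identify matching terms: for example the single $z_l$-term on the right, $z_l\,(z_k A \hv (B \hv z_m C))$, equals the $z_l$-term on the left, $z_l\,((z_k A \hv B) \hv z_m C)$, and the three $z_m$-terms on the left collapse by the recursion (read backward) to $z_m\,((z_k A \hv z_l B)\hv C)$, which by induction equals the unique $z_m$-term on the right.

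\textbf{Anticipated difficulty.} Neither step is conceptually subtle; the whole argument follows the pattern of the proof of commutativity/associativity of the harmonic product $*$ in \cite{H2}, \cite{IKZ}. The only point requiring care is the bookkeeping for associativity: because of the \emph{minus} sign in front of $z_{k+l}(w_1\hv w_2)$ (as opposed to the $+$ sign appearing in the classical $*$), one must verify that the signs of the nine expanded terms on each side pair up correctly. This is routine but the most error-prone step, so I would organize the comparison by grouping the nine terms according to which letter ($z_k$, $z_l$, $z_m$, $z_{k+l}$, $z_{k+m}$, $z_{l+m}$, or $z_{k+l+m}$) they begin with, and verify that in each of the seven groups the left-hand and right-hand contributions coincide after applying the inductive hypothesis.
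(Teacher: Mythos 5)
Your induction is correct, and it carries out precisely the argument the paper only gestures at (``We can prove the assertion by induction, cf.\ Theorem 2.1 of [H2]'') before taking a different route. The paper's actual proof, given after Lemma \ref{2.9}, is a transport-of-structure argument: once one knows $S^{-1}(w_{1}*w_{2})=S^{-1}(w_{1})\hv S^{-1}(w_{2})$, i.e.\ (\ref{eq:2.6}), commutativity and associativity of $\hv$ follow in a few lines from the corresponding properties of $*$, since $w_{1}\hv w_{2}=S^{-1}\big(S(w_{1})*S(w_{2})\big)$. That route is shorter and more conceptual --- it identifies $(\mathfrak{H}^{1},\hv)$ with $(\mathfrak{H}^{1},*)$ via the isomorphism $S$ --- but it is only available after Proposition \ref{2.4} and Lemma \ref{2.9}, and the paper's proof of (\ref{eq:2.4}) itself invokes the commutativity of $\hv$ in its step (ii), so at least the commutativity half genuinely needs a direct check like yours to avoid circularity. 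Your seven-group bookkeeping for associativity is the right organization and does close: each of the leading letters $z_{k}$, $z_{l}$, $z_{m}$, $z_{k+l}$, $z_{k+m}$, $z_{l+m}$, $z_{k+l+m}$ occurs with the same sign on both sides ($+$ for the three single indices, $-$ for the three double sums, $+$ for the triple sum), and the three terms beginning with $z_{m}$ on the left (respectively with $z_{k}$ on the right) recombine via the recursion read backwards exactly as you describe. One small correction to your tally: of the nine terms on the left, three begin with $z_{m}$ and one each with the other six letters (rather than six beginning with $z_{k},z_{l},z_{k+l}$), but this does not affect the argument.
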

\begin{proof}
We can prove the assertion by induction. (cf. Theorem 2.1 of [H2].) But we later give another proof.  
\end{proof}

Proposition \ref{2.3} says that $\mathfrak{H}^1$ has the commutative $\mathbb{Q}$-algebra structure 
with respect to $\hv$. 
We denote this algebra by $\mathfrak{H}^{1}_{\hv}$. The subset $\mathfrak{H}^0$ is a subalgebra of $\mathfrak{H}^1$ 
with respect to $\hv$ 
and we denote it by $\mathfrak{H}^{0}_{\hv}$. 

We introduce the $\mathbb{Q}$-linear map $S:\mathfrak{H}^1 \longrightarrow  \mathfrak{H}^1$. 
Let $S_{1} \in Aut(\mathfrak{H})$ be defined by $S_{1}(1)=1$, $S_{1}(x)=x$ and $S_{1}(y)=x+y$. 
Define the $\mathbb{Q}$-linear map $S$ : $\mathfrak{H}^{1} \longrightarrow \mathfrak{H}^{1}$ by 
\[S(1):=1 \quad \mathrm{and} \quad S(Fy):=S_{1}(F)y\]
for all words $F \in \mathfrak{H}$. 
Then it is clear that $\overline{Z} = Z \circ S$ on $\mathfrak{H}^0$, i.e., 
\[\zs(k_{1},k_{2},\ldots,k_{n}) = Z (S( z_{k_{1}} z_{k_{2}} \cdots z_{k_{n}} ) ) \quad (k_{1} \geq 2).\]
For example, 
$\zs(k_{1},k_{2}) = \zeta(k_{1} + k_{2}) + \zeta(k_{1} , k_{2} ) 
 = Z( S( z_{k_{1}} z_{k_{2}} ) )$, 
$\zs(k_{1},k_{2},k_{3}) = \zeta(k_{1} + k_{2} + k_{3}) + \zeta(k_{1} + k_{2} , k_{3}) 
 + \zeta(k_{1} , k_{2} + k_{3}) + \zeta(k_{1} , k_{2} , k_{3}) 
 = Z( S( z_{k_{1}} z_{k_{2}} z_{k_{3}} ) )$. 
As is clear from the definition of $S$, we also have the following relation: 
\begin{align}
 S( w_{1} w_{2}) &= S_{1}(w_{1}) S(w_{2}) 
 \quad (w_{1} \in \mathfrak{H}, w_{2} \in \mathfrak{H}^1). \label{eq:2.1}
\end{align}

\begin{prop}\label{2.4}
For $w_{1}$, $w_{2} \in \mathfrak{H}^0$, we have 
\[ \overline{Z}(w_{1} \hv w_{2}) = \overline{Z}(w_{1}) \overline{Z}(w_{2}). \]
\end{prop}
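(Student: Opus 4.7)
The plan is to reduce this identity to the known multiplicativity of $Z$ with respect to the harmonic product $*$ via the map $S$. Since $\overline{Z} = Z \circ S$ on $\mathfrak{H}^0$, and $S$ sends $\mathfrak{H}^0$ into itself (from the definition, $xFy \mapsto x\,S_{1}(F)\,y$), it suffices to establish the purely algebraic identity
\[ S(w_{1} \hv w_{2}) \;=\; S(w_{1}) * S(w_{2}) \qquad (w_{1}, w_{2} \in \mathfrak{H}^{1}). \]
Granting this, for $w_{1}, w_{2} \in \mathfrak{H}^{0}$ we apply $Z$ to both sides and use $Z(v_{1} * v_{2}) = Z(v_{1})\,Z(v_{2})$ on $\mathfrak{H}^{0}$ to obtain
\[ \overline{Z}(w_{1} \hv w_{2}) \;=\; Z(S(w_{1}) * S(w_{2})) \;=\; Z(S(w_{1}))\,Z(S(w_{2})) \;=\; \overline{Z}(w_{1})\,\overline{Z}(w_{2}). \]

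To prove the key identity, I would induct on the total $y$-degree $l(w_{1}) + l(w_{2})$. The base case, in which one of $w_{1}$, $w_{2}$ lies in $\mathbb{Q}$, is immediate from $1 \hv v = v = 1 * v$ together with $S(1) = 1$. For the inductive step, by $\mathbb{Q}$-bilinearity assume $w_{1} = z_{k} u_{1}$ and $w_{2} = z_{l} u_{2}$ with $u_{1}, u_{2} \in \mathfrak{H}^{1}$. Expand the left side via the defining recursion of $\hv$ into three terms and apply $S$ to each using the auxiliary formula
\[ S(z_{m} v) \;=\; (x^{m} + z_{m})\,S(v) - [v]_{0}\,x^{m} \qquad (v \in \mathfrak{H}^{1}), \]
where $[v]_{0}$ denotes the constant coefficient of $v$; this formula is a direct consequence of the definition of $S$ together with (\ref{eq:2.1}). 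On the other side, expand $S(w_{1}) * S(w_{2})$ by first applying the same auxiliary formula to $S(z_{k} u_{1})$ and $S(z_{l} u_{2})$, and then the defining recursion of $*$. Invoking the inductive hypothesis $S(a \hv b) = S(a) * S(b)$ on each term of smaller total $y$-degree and collecting terms matches the two sides; the crux is that the $-z_{k+l}$ coefficient appearing in the $\hv$-recursion exactly offsets the $+z_{k+l}$ coefficient produced by the $*$-recursion, once $S$ has replaced each leading $z_{m}$ by $(x^{m} + z_{m})$.

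The main obstacle will be the careful bookkeeping of the ``boundary'' correction terms $[v]_{0}\,x^{m}$ in the auxiliary formula, which are non-zero precisely when $u_{1}$ or $u_{2}$ equals $1$ (in which case $S(z_{m} u_{i})$ is simply $z_{m}$ rather than $(x^{m}+z_{m}) S(u_{i})$). Writing $u_{i} = [u_{i}]_{0} + u_{i}'$ with $u_{i}' \in \mathfrak{H} y$ reduces the verification to a small number of bilinear cases, in each of which the required cancellation either falls into the base case or follows directly from the inductive hypothesis. As a by-product, the same key identity furnishes the second proof of Proposition~\ref{2.3} promised earlier: since $S$ is a $\mathbb{Q}$-linear bijection on $\mathfrak{H}^{1}$ (being upper triangular with unit diagonal with respect to the length filtration on words), the commutativity and associativity of $\hv$ are inherited from those of $*$ by applying $S^{-1}$ to $S(w_{1}) * S(w_{2}) = S(w_{2}) * S(w_{1})$ and to the corresponding associativity identity.
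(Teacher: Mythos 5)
Your proposal is correct and follows essentially the same route as the paper: reduce to the algebraic identity $S(w_{1} \hv w_{2}) = S(w_{1}) * S(w_{2})$ on $\mathfrak{H}^{1}$ and establish it by induction on word length, using the fact that $S$ turns a leading $z_{m}$ into $x^{m}+z_{m}$. The cancellation you single out as the crux --- that the $-z_{k+l}$ term of the $\hv$-recursion is exactly absorbed once the leading letters are replaced by $x^{m}+z_{m}$ --- is precisely what the paper isolates and verifies explicitly as Lemma~\ref{2.5} (equations (\ref{eq:2.2}) and (\ref{eq:2.3})).
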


To prove Proposition \ref{2.4}, we need the following lemma. 
\begin{lem}\label{2.5}
Let $w$, $w_{1}$, $w_{2}$ be words $($$\neq 1$$)$ in $\mathfrak{H}^1$ and $p$, $q$ positive integers. 
Then we have 
\begin{align}
\qquad\; S(z_{p}) * S_{1}(z_{q})w &=  S_{1}(z_{p} z_{q})w + S_{1}(z_{q})(S(z_{p}) * w) - S_{1}(z_{p+q})w \label{eq:2.2} 
\end{align}
and
\begin{align}
S_{1}(z_{p})w_{1} * S_{1}(z_{q})w_{2} &= S_{1}(z_{p})(w_{1} * S_{1}(z_{q})w_{2}) + S_{1}(z_{q})(S_{1}(z_{p})w_{1} * w_{2})\nonumber \\
 & \quad - S_{1}(z_{p+q})(w_{1} * w_{2}) \label{eq:2.3}
\end{align}
\end{lem}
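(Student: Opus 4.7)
The plan is to prove both (\ref{eq:2.2}) and (\ref{eq:2.3}) by direct expansion of both sides, exploiting the defining recursion of the harmonic product $*$ together with the explicit formula $S_1(z_k) = z_k + x^k$ in $\mathfrak{H}$. The essential preliminary is that for any nonempty word $u \in \mathfrak{H}^1$, writing $u = z_c u''$ (with $u'' \in \mathfrak{H}^1$) gives $x^k u = z_{k+c}\,u''$, so that
\[S_1(z_k)\,u \;=\; z_k u + z_{k+c}\,u'',\]
a sum of two words each beginning with a $z$-letter. This decomposition brings every occurrence of $S_1(z_k)\cdot(\text{word})$ into a form on which the harmonic-product recursion can act.

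For (\ref{eq:2.3}), I would write $w_1 = z_a w_1'$ and $w_2 = z_b w_2'$. Substituting the decomposition above and expanding bilinearly, the left-hand side becomes a sum of four harmonic products of the form $z_k u * z_l v$; one application of the recursion to each yields twelve atomic summands of type $z_m(\cdots)$. I would group these by their leading index $m$: the two at $m = p$ combine into $z_p(w_1 * S_1(z_q) w_2)$, those at $m = p+a$ into $z_{p+a}(w_1' * S_1(z_q) w_2)$, and symmetrically for $m = q$ and $m = q+b$; the remaining four ``merged'' summands are $z_{p+q}(w_1 * w_2)$, $z_{p+q+a}(w_1' * w_2)$, $z_{p+q+b}(w_1 * w_2')$, and $z_{p+q+a+b}(w_1' * w_2')$. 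On the right, I would expand each of $B := S_1(z_p)(w_1 * S_1(z_q) w_2)$, $C := S_1(z_q)(S_1(z_p) w_1 * w_2)$, and $D := S_1(z_{p+q})(w_1 * w_2)$ using the same formula $S_1(z_k) = z_k + x^k$ together with the harmonic recursion: the $z_p(\cdot)$ piece of $B$ and the $z_q(\cdot)$ piece of $C$ reproduce two of the four non-merged LHS groups directly, while the $x^p(\cdot)$ piece of $B$ further produces the third non-merged group $z_{p+a}(w_1' * S_1(z_q) w_2)$ together with one copy of each merged summand, and symmetrically for $C$. Thus $B + C$ already reproduces all four non-merged groups and contains each merged summand twice; subtracting $D$, which expands to exactly one copy of each merged summand, removes the duplication precisely.

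For (\ref{eq:2.2}), I would first observe that $S(z_p) = S_1(x^{p-1})\,y = x^{p-1}y = z_p$, so the identity reads $z_p * S_1(z_q) w = S_1(z_p) S_1(z_q) w + S_1(z_q)(z_p * w) - S_1(z_{p+q}) w$. Writing $w = z_c w''$, the same procedure works: expand $z_p * (z_q w + z_{q+c} w'')$ on the left via the harmonic recursion, expand $S_1(z_p) S_1(z_q) w = (z_p + x^p)(z_q + x^q) w$ into four concrete words on the right, and expand the remaining summands by $S_1(z_k) = z_k + x^k$; an eight-for-eight term match then confirms the identity, with $-S_1(z_{p+q}) w$ again cancelling a duplicated contribution. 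The only real difficulty in either case will be the bookkeeping---each side produces many atomic summands, and one must group them carefully (indexed by their leading $z$-letter) to observe the cancellations. A simultaneous induction on $|w_1|+|w_2|$ (resp.\ $|w|$) offers an alternative organisation but is no shorter.
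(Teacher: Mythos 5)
Your proposal is correct and follows essentially the same route as the paper: both proofs are direct expansions using the decomposition $S_{1}(z_{k})u = z_{k}u + x^{k}u$ (equivalently $S_{1}(z_{k}) = z_{k} + x^{k}$) together with a single application of the harmonic-product recursion, followed by term-by-term matching; the paper expands the right-hand side of (\ref{eq:2.2}) with $w = z_{n}\tilde{w}$ and recognizes the result as $z_{p} * S_{1}(z_{q})w$, which is the same bookkeeping you describe. Your observation that $S(z_{p}) = z_{p}$ is also implicitly used in the paper's computation, and your term groupings check out.
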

\begin{proof}
We first prove (\ref{eq:2.2}). Put $w = z_{n} \tilde{w} \, (n \geq 1, \tilde{w} \in \mathfrak{H}^{1})$, then 
\begin{align*}
\lefteqn{ \mathrm{RHS \; of \;} (\ref{eq:2.2}) } \\
 &= (x^{p-1}y + x^{p}) (x^{q-1}y + x^{q}) z_{n} \tilde{w} 
   + (x^{q-1}y + x^{q}) (z_{p} * z_{n} \tilde{w}) \displaybreak[0]\\
 & \quad - (x^{p+q-1}y + x^{p+q}) z_{n} \tilde{w} \displaybreak[0]\\
 &= z_{p} z_{q} z_{n} \tilde{w} + z_{p} z_{n+q} \tilde{w} + z_{p+q} z_{n} \tilde{w} + z_{n+p+q} \tilde{w} \displaybreak[0]\\
 & \quad  + z_{q} z_{p} z_{n} \tilde{w} + z_{q} z_{n} (z_{p} * \tilde{w}) + z_{q} z_{n+p} \tilde{w} 
           + z_{p+q} z_{n} \tilde{w} + z_{n+q} (z_{p} * \tilde{w}) + z_{n+p+q} \tilde{w} \\
 & \quad - z_{p+q} z_{n} \tilde{w} -z_{n+p+q} \tilde{w} \displaybreak[0]\\  
 &= z_{p} * z_{q} z_{n} \tilde{w} + z_{p} * z_{n+q} \tilde{w} 
  = S(z_{p}) * S_{1}(z_{q}) z_{n} \tilde{w} 
  = S(z_{p}) * S_{1}(z_{q}) w.
\end{align*}
Hence (\ref{eq:2.2}) follows. 
Putting $w_{1} = z_{m} \tilde{w_{1}}$, $w_{2} = z_{n} \tilde{w_{2}} \, (m, n \geq 1, \tilde{w_{1}}, \tilde{w_{2}} \in \mathfrak{H}^{1})$, 
we can prove (\ref{eq:2.3}) in the same way.  
\end{proof}
\begin{proof}[ Proof of Proposition $\ref{2.4}$] 
It suffices to show that 
\begin{align}
S(w_{1} \hv w_{2}) &= S(w_{1}) * S(w_{2}) \label{eq:2.4}
\end{align}
for words $w_{1}$, $w_{2} \in \mathfrak{H}^1$. 
We set $w_{1} = z_{ p_{1} } z_{ p_{2} } \cdots z_{ p_{m} }$, 
$w_{2} = z_{ q_{1} } z_{ q_{2} } \cdots z_{ q_{n} }$. 
We prove (\ref{eq:2.4}) by induction on $m$. To ease the following calculation, we set 
$z_{\vec{p}} = z_{ p_{2} } z_{ p_{3} }\cdots z_{ p_{m} }$ and $z_{\vec{q}} = z_{ q_{2} } z_{ q_{3} }\cdots z_{ q_{n} }$. 
(i) We prove the case $m=1$ by induction on $n$. When $n=1$, the assertion is immediate. 
We assume the assertion is proven for $n-1$. Using (\ref{eq:2.1}), (\ref{eq:2.2}) and the induction hypothesis, we have 
\begin{align*}
\lefteqn{S(z_{p_{1}} \hv z_{q_{1}} z_{q_{2}} \cdots z_{q_{n}}) 
          = S(z_{p_{1}} \hv z_{q_{1}} z_{\vec{q}}) } \\
&= S \big( z_{p_{1}} z_{q_{1}} z_{\vec{q}} 
     + z_{q_{1}} (z_{p_{1}} \hv z_{\vec{q}}) 
     - z_{p_{1}+q_{1}} z_{\vec{q}}  \big) \displaybreak[0]\\
&= S_{1}( z_{p_{1}} z_{q_{1}}) S( z_{\vec{q}} )
     + S_{1}( z_{q_{1}} ) S(z_{p_{1}} \hv z_{\vec{q}} ) 
     - S_{1}( z_{p_{1}+q_{1}} ) S( z_{\vec{q}} )  \displaybreak[0]\\
&= S_{1}( z_{p_{1}} z_{q_{1}}) S( z_{\vec{q}} )
     + S_{1}( z_{q_{1}} )\big( S(z_{p_{1}}) * S( z_{\vec{q}} ) \big)
     - S_{1}( z_{p_{1}+q_{1}} ) S( z_{\vec{q}} )  \displaybreak[0]\\
&= S( z_{p_{1}}) * S_{1}(z_{q_{1}}) S( z_{\vec{q}}) \displaybreak[0]\\
&= S( z_{p_{1}}) * S(z_{q_{1}} z_{\vec{q}}) = S( z_{p_{1}}) * S(z_{q_{1}} z_{q_{2}} \cdots z_{q_{n}} ). 
\end{align*}
(ii) We assume the assertion is proven for $m-1$. We prove the assertion for $m$ by induction on $n$. 
When $n=1$, the assertion follows from (i) and the commutativity of $*$, $\hv$. 
We assume the assertion is true for $n-1$. 
Using $(\ref{eq:2.1})$, $(\ref{eq:2.3})$ and the induction hypothesis, we have
\begin{align*}
\lefteqn{S(z_{p_{1}} z_{p_{2}} \cdots z_{p_{m}} \hv z_{q_{1}} z_{q_{2}} \cdots z_{q_{n}}) 
          = S(z_{p_{1}} z_{\vec{p}} \hv z_{q_{1}} z_{\vec{q}}) } \\
&= S \big( z_{p_{1}} ( z_{\vec{p}} \hv z_{q_{1}} z_{\vec{q}})
     + z_{q_{1}} (z_{p_{1}} z_{\vec{p}} \hv z_{\vec{q}}) 
     - z_{p_{1}+q_{1}} ( z_{\vec{p}} \hv z_{\vec{q}} ) \big) \displaybreak[0]\\
&= S_{1}( z_{p_{1}} ) S( z_{\vec{p}} \hv z_{q_{1}} z_{\vec{q}} )
     + S_{1}( z_{q_{1}} ) S(z_{p_{1}} z_{\vec{p}} \hv z_{\vec{q}} ) 
     - S_{1}( z_{p_{1}+q_{1}} ) S( z_{\vec{p}} \hv z_{\vec{q}} )  \displaybreak[0]\\
&= S_{1} (z_{p_{1}}) \big( S( z_{\vec{p}} ) * S(z_{q_{1}} z_{\vec{q}} ) \big)
   +S_{1} (z_{q_{1}}) \big( S(z_{p_{1}} z_{\vec{p}}) * S( z_{\vec{q}} ) \big) \displaybreak[0]\\
& \quad -S_{1} (z_{p_{1}+q_{1}}) \big( S( z_{\vec{p}} ) * S( z_{\vec{q}} ) \big) \displaybreak[0]\\
&= S_{1} (z_{p_{1}}) \big( S( z_{\vec{p}} ) * S_{1}(z_{q_{1}})S( z_{\vec{q}} ) \big)
   +S_{1} (z_{q_{1}}) \big( S_{1}(z_{p_{1}})S( z_{\vec{p}} ) * S( z_{\vec{q}} ) \big) \\
& \quad -S_{1} (z_{p_{1}+q_{1}}) \big( S( z_{\vec{p}} ) * S( z_{\vec{q}} ) \big) \displaybreak[0]\\
&= S_{1}(z_{p_{1}}) S( z_{\vec{p}} ) 
   * S_{1}(z_{q_{1}}) S( z_{\vec{q}} ) \\
&= S(z_{p_{1}} z_{\vec{p}} ) * S(z_{q_{1}} z_{\vec{q}} ) 
 = S(z_{p_{1}} z_{p_{2}} \cdots z_{p_{m}}) * S(z_{q_{1}} z_{q_{2}} \cdots z_{q_{n}}).
\end{align*}
This completes the proof. 
\end{proof}

We shall define the n-shuffle product $\sv$ on $\mathfrak{H}$ which corresponds to the shuffle product $\sh$. 
The n-shuffle product $\sv$ is defined inductively by 
\begin{align*}
1 \sv w & = w \sv 1 = w   \\
u_{1} w_{1} \sv u_{2} w_{2} 
 &= u_{1} (w_{1} \sv u_{2} w_{2}) + u_{2} (u_{1} w_{1} \sv w_{2}) \\
 & \qquad - \delta(w_{1}) \tau(u_{1}) u_{2} w_{2} - \delta(w_{2}) \tau(u_{2}) u_{1} w_{1} 
\end{align*}
($u_{1}, u_{2} \in \{x,y\}$ and $w$, $w_{1}$, $w_{2}$ are words in $\mathfrak{H}$), together with $\mathbb{Q}$-bilinearity, 
where $\delta$ is defined by 
\[\delta(w)=
\left\{
\begin{array}{ll}
 1 &\quad  (w=1), \\
 0 &\quad  (w\neq 1)
\end{array}
\right.\]
for word $w$ and $\tau$ is defined by $\tau(x) = y$, $\tau(y) = x$. 
The n-shuffle product has the following properties. 
\begin{prop}
The n-shuffle product is commutative and associative. 
\label{2.6}
\end{prop}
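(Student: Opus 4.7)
My plan is to prove both claims by induction on total word length, following the pattern of the standard argument for the ordinary shuffle $\sh$ (cf.\ Theorem 2.1 of [H2]) with extra bookkeeping for the $\delta\tau$-corrections that distinguish $\sv$ from $\sh$. For commutativity, induct on $|a|+|b|$. If one of $a,b$ equals $1$, the claim is immediate from the first defining line of $\sv$. Otherwise write $a=u_1w_1$, $b=u_2w_2$, and compare the defining expansions of $a\sv b$ and $b\sv a$. The two correction terms $\delta(w_1)\tau(u_1)u_2w_2$ and $\delta(w_2)\tau(u_2)u_1w_1$ appear unchanged in both expansions, while the two main summands match term-by-term after applying the induction hypothesis to the inner n-shuffles $w_1\sv u_2w_2$ and $u_1w_1\sv w_2$, each of which has total degree strictly less than $|a|+|b|$.

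\textbf{Associativity.} Induct on $|a|+|b|+|c|$. The cases where any one of $a,b,c$ equals $1$ are again immediate. Otherwise write $a=u_1w_1$, $b=u_2w_2$, $c=u_3w_3$ and expand both $(a\sv b)\sv c$ and $a\sv(b\sv c)$ by applying the recursion twice, extending by $\mathbb{Q}$-bilinearity to linear combinations of words. Each side splits into a ``main part'' coming from the iterated $u_i(\cdots)$ summands and a ``correction part'' built from the $\delta\tau$-terms. The main part involves inner n-shuffles of strictly smaller total degree, and the induction hypothesis collapses it to the corresponding expression on the other side exactly as in the standard proof of associativity of $\sh$.

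\textbf{Main obstacle.} The delicate step is matching the correction contributions. Since $\delta$ vanishes on any linear combination of words of length $\geq 1$, and since $a\sv b$ (with $a,b$ both nontrivial) is such a combination, the outer $\delta$'s appearing in the second round of expansion vanish; only the inner corrections of $a\sv b$ and $b\sv c$ propagate. These surviving corrections fire precisely when some $w_i=1$, i.e.\ when $a$, $b$, or $c$ is a single letter. I would therefore organize the argument by case analysis on the subset of $\{w_1,w_2,w_3\}$ that equals $1$. The base case of three single letters is verified by direct expansion from the identity $u_i\sv u_j=u_iu_j+u_ju_i-\tau(u_i)u_j-\tau(u_j)u_i$ and checking $(u_1\sv u_2)\sv u_3=u_1\sv(u_2\sv u_3)$; the mixed cases reduce to the base case combined with the induction hypothesis, and the already-established commutativity of $\sv$ roughly halves the number of subcases by symmetry. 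This bookkeeping of $\tau$-tails, though routine, is the main labor of the proof.
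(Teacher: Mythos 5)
Your plan is correct and matches the paper's proof in all essentials: both establish commutativity and associativity by induction on total degree, expand the two associations via the recursion, match the main summands through the induction hypothesis, and use the observation that $a \sv b$ (for $a,b$ nontrivial) contains no words with empty tail to kill the spurious outer $\delta$'s. The only difference is organizational: rather than a case analysis on which $w_i = 1$, the paper carries the coefficients $\delta(\tilde{w_i})$ symbolically through a single uniform computation (using three auxiliary rearrangement identities on each side), which avoids the subcase bookkeeping you anticipate as the main labor.
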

\begin{proof}
Let $w_{1}$, $w_{2}$, $w_{3}$ be words in $\mathfrak{H}$. 
We can check the commutativity $w_{1} \sv w_{2} = w_{2} \sv w_{1}$ by induction on $\left| w_{1} \right| + \left| w_{2} \right|$. 
We shall prove the associativity $(w_{1} \sv w_{2}) \sv w_{3} = w_{1} \sv (w_{2} \sv w_{3})$ 
by induction on $\left| w_{1} \right| + \left| w_{2} \right| + \left| w_{3} \right|$. 
The case $\left| w_{1} \right| + \left| w_{2} \right| + \left| w_{3} \right| \leq 2$ is obvious. 
Putting $w_{1} = u_{1} \tilde{ w_{1} }$, $w_{2} = u_{2} \tilde{ w_{2} }$, $w_{3} = u_{3} \tilde{ w_{3} }$
($u_{1}, u_{2}, u_{3} \in \left\{x,y \right\}$), we have 
\begin{align*}
\lefteqn{(w_{1} \sv w_{2}) \sv w_{3}} \\
 &= u_{1} (\tilde{ w_{1} } \sv u_{2} \tilde{ w_{2} }) \sv u_{3} \tilde{ w_{3} } 
    + u_{2} (u_{1} \tilde{ w_{1} } \sv \tilde{ w_{2} }) \sv u_{3} \tilde{ w_{3} } \\
 & \quad  - \delta(\tilde{ w_{1} }) \tau(u_{1}) u_{2} \tilde{ w_{2} } \sv u_{3} \tilde{ w_{3} } 
           - \delta(\tilde{ w_{2} }) \tau(u_{2}) u_{1} \tilde{ w_{1} } \sv u_{3} \tilde{ w_{3} } \displaybreak[0]\\
 &= u_{1} \left\{ (\tilde{ w_{1} } \sv u_{2} \tilde{ w_{2} }) \sv u_{3} \tilde{ w_{3} } \right\} 
    + u_{3} \left\{ u_{1} (\tilde{ w_{1} } \sv u_{2} \tilde{ w_{2} }) \sv \tilde{ w_{3} } \right\} \\
 & \quad - \delta(\tilde{ w_{3} }) \tau(u_{3}) u_{1} (\tilde{ w_{1} } \sv u_{2} \tilde{ w_{2} }) 
         + u_{2} \left\{ (u_{1} \tilde{ w_{1} } \sv \tilde{ w_{2} }) \sv u_{3} \tilde{ w_{3} } \right\} \\
 & \quad  + u_{3} \left\{ u_{2} (u_{1} \tilde{ w_{1} } \sv \tilde{ w_{2} }) \sv \tilde{ w_{3} }\right\}
          -\delta(\tilde{ w_{3} }) \tau(u_{3}) u_{2} (u_{1} \tilde{ w_{1} } \sv \tilde{ w_{2} }) \\
 & \quad - \delta(\tilde{ w_{1} }) \tau(u_{1}) u_{2} \tilde{ w_{2} } \sv u_{3} \tilde{ w_{3} } 
         - \delta(\tilde{ w_{2} }) \tau(u_{2}) u_{1} \tilde{ w_{1} } \sv u_{3} \tilde{ w_{3} } \displaybreak[0]\\
 &= u_{1} \left\{ (\tilde{ w_{1} } \sv u_{2} \tilde{ w_{2} }) \sv u_{3} \tilde{ w_{3} } \right\}
    + u_{2} \left\{ (u_{1} \tilde{ w_{1} } \sv \tilde{ w_{2} }) \sv u_{3} \tilde{ w_{3} } \right\}   \\
 & \quad + u_{3} \left\{ (u_{1} \tilde{ w_{1} } \sv u_{2} \tilde{ w_{2} }) \sv \tilde{ w_{3} } \right\} 
   - \delta(\tilde{ w_{1} }) \tau(u_{1}) (u_{2} \tilde{ w_{2} } \sv u_{3} \tilde{ w_{3} }) \\
 & \quad - \delta(\tilde{ w_{2} }) \tau(u_{2}) (u_{1} \tilde{ w_{1} } \sv u_{3} \tilde{ w_{3} })
     - \delta(\tilde{ w_{3} }) \tau(u_{3}) (u_{1} \tilde{ w_{1} } \sv u_{2} \tilde{ w_{2} }).
\end{align*} 
In the last equality, we use the following three relations: 
\begin{align*}
\lefteqn{ u_{1} (\tilde{ w_{1} } \sv u_{2} \tilde{ w_{2} }) + u_{2} (u_{1} \tilde{ w_{1} } \sv \tilde{ w_{2} }) } \\
 & \quad = u_{1} \tilde{ w_{1} } \sv u_{2} \tilde{ w_{2} } + \delta(\tilde{ w_{1} }) \tau(u_{1}) u_{2} \tilde{ w_{2} }
                                                     + \delta(\tilde{ w_{2} }) \tau(u_{2}) u_{1} \tilde{ w_{1} },\\
\lefteqn{\tau(u_{1}) u_{2} \tilde{ w_{2} } \sv u_{3} \tilde{ w_{3} }} \\
 & \quad =  \tau(u_{1}) (u_{2} \tilde{ w_{2} } \sv u_{3} \tilde{ w_{3} }) + u_{3} (\tau(u_{1}) u_{2} \tilde{ w_{2} } \sv \tilde{ w_{3} })
          - \delta(\tilde{ w_{3} }) \tau(u_{3}) \tau(u_{1}) u_{2} \tilde{ w_{2} }, \\
\lefteqn{\tau(u_{2}) u_{1} \tilde{ w_{1} } \sv u_{3} \tilde{ w_{3} } } \\
 & \quad =  \tau(u_{2}) (u_{1} \tilde{ w_{1} } \sv u_{3} \tilde{ w_{3} }) + u_{3} (\tau(u_{2}) u_{1} \tilde{ w_{1} } \sv \tilde{ w_{3} })
          - \delta(\tilde{ w_{3} }) \tau(u_{3}) \tau(u_{2}) u_{1} \tilde{ w_{1} }.
\end{align*}
On the other hand, 
\begin{align*}
\lefteqn{w_{1} \sv (w_{2} \sv w_{3})} \\
 &= u_{1} \tilde{ w_{1} } \sv u_{2} (\tilde{ w_{2} } \sv u_{3} \tilde{ w_{3} } ) 
    + u_{1} \tilde{ w_{1} } \sv u_{3} (u_{2} \tilde{ w_{2} } \sv  \tilde{ w_{3} }) \\
 & \quad - \delta(\tilde{ w_{2} })  u_{1} \tilde{ w_{1} } \sv \tau(u_{2}) u_{3} \tilde{ w_{3} } 
         - \delta(\tilde{ w_{3} }) u_{1} \tilde{ w_{1} } \sv \tau(u_{3}) u_{2} \tilde{ w_{2} } \displaybreak[0]\\
 &= u_{1} \left\{ \tilde{ w_{1} } \sv u_{2} (\tilde{ w_{2} } \sv u_{3} \tilde{ w_{3} } ) \right\}
    + u_{2} \left\{ u_{1} \tilde{ w_{1} } \sv (\tilde{ w_{2} } \sv u_{3} \tilde{ w_{3} } ) \right\} \\
 & \quad - \delta(\tilde{ w_{1} }) \tau(u_{1}) u_{2} (\tilde{ w_{2} } \sv u_{3} \tilde{ w_{3} } ) 
   + u_{1} \left\{ \tilde{ w_{1} } \sv u_{3} (u_{2} \tilde{ w_{2} } \sv  \tilde{ w_{3} }) \right\} \\ 
 & \quad + u_{3} \left\{ u_{1} \tilde{ w_{1} } \sv (u_{2} \tilde{ w_{2} } \sv  \tilde{ w_{3} }) \right\} 
          - \delta(\tilde{ w_{1} }) \tau(u_{1}) u_{3} (u_{2 }\tilde{ w_{2} } \sv \tilde{ w_{3} } ) \\
 & \quad - \delta(\tilde{ w_{2} }) u_{1} \tilde{ w_{1} } \sv \tau(u_{2}) u_{3} \tilde{ w_{3} }  
          - \delta(\tilde{ w_{3} }) u_{1} \tilde{ w_{1} } \sv \tau(u_{3}) u_{2} \tilde{ w_{2} } \displaybreak[0]\\
 &= u_{1} \left\{ \tilde{ w_{1} } \sv (u_{2} \tilde{ w_{2} } \sv u_{3} \tilde{ w_{3} }) \right\} 
    + u_{2} \left\{ u_{1} \tilde{ w_{1} } \sv (\tilde{ w_{2} } \sv u_{3} \tilde{ w_{3} }) \right\} \\
 & \quad  + u_{3} \left\{ u_{1} \tilde{ w_{1} } \sv (u_{2} \tilde{ w_{2} } \sv \tilde{ w_{3} }) \right\} 
  -\delta(\tilde{ w_{1} }) \tau(u_{1}) (u_{2} \tilde{ w_{2} } \sv u_{3} \tilde{ w_{3} }) \\
 & \quad -\delta(\tilde{ w_{2} }) \tau(u_{2}) (u_{1} \tilde{ w_{1} } \sv u_{3} \tilde{ w_{3} }) 
           -\delta(\tilde{ w_{3} }) \tau(u_{3}) (u_{1} \tilde{ w_{1} } \sv u_{2} \tilde{ w_{2} }).
\end{align*}
In the last equality, we use the following three relations: 
\begin{align*}
\lefteqn{u_{2} (\tilde{ w_{2} } \sv u_{3} \tilde{ w_{3} } ) + u_{3} (u_{2} \tilde{ w_{2} } \sv  \tilde{ w_{3} }) } \\
 & \quad = u_{2} \tilde{ w_{2} } \sv u_{3} \tilde{ w_{3} } 
         + \delta(\tilde{ w_{2} }) \tau(u_{2}) u_{3} \tilde{ w_{3} } 
         + \delta(\tilde{ w_{3} }) \tau(u_{3}) u_{2} \tilde{ w_{2} }, \\
\lefteqn{u_{1} \tilde{ w_{1} } \sv \tau(u_{2}) u_{3} \tilde{ w_{3} } } \\
 & \quad = u_{1} (\tilde{ w_{1} } \sv \tau(u_{2}) u_{3} \tilde{ w_{3} }) 
         + \tau(u_{2}) (u_{1} \tilde{ w_{1} } \sv u_{3} \tilde{ w_{3} }) 
         - \delta(\tilde{ w_{1} }) \tau(u_{1}) \tau(u_{2}) u_{3} \tilde{ w_{3} }, \\
\lefteqn{u_{1} \tilde{ w_{1} } \sv \tau(u_{3}) u_{2} \tilde{ w_{2} }} \\
 & \quad = u_{1} (\tilde{ w_{1} } \sv \tau(u_{3}) u_{2} \tilde{ w_{2} }) 
         + \tau(u_{3}) (u_{1} \tilde{ w_{1} } \sv u_{2} \tilde{ w_{2} })
         - \delta(\tilde{ w_{1} }) \tau(u_{1}) \tau(u_{3}) u_{2} \tilde{ w_{2} }. 
\end{align*}
So we have the assertion by the induction hypothesis. 
\end{proof}

Proposition \ref{2.6} says that $\mathfrak{H}$ has the commutative $\mathbb{Q}$-algebra structure with respect to $\sv$. 
We denote it by $\mathfrak{H}_{\sv}$. Subsets $\mathfrak{H}^{1}$ and $\mathfrak{H}^{0}$ are subalgebras of $\mathfrak{H}$ 
with respect to $\sv$ and we denote them by $\mathfrak{H}^{1}_{\sv}$, $\mathfrak{H}^{0}_{\sv}$ respectively. 

\begin{prop}\label{2.7}
For $w_{1}$, $w_{2} \in \mathfrak{H}^0$, we have 
\[ \overline{Z}(w_{1} \sv w_{2}) = \overline{Z}(w_{1}) \overline{Z}(w_{2}). \]
\end{prop}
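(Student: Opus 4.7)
The plan is to mirror the proof of Proposition \ref{2.4}. The key is to establish the shuffle analog of (\ref{eq:2.4}):
\[
S(w_1 \sv w_2) = S(w_1) \sh S(w_2) \qquad (w_1, w_2 \in \mathfrak{H}^1).
\]
Granting this, Proposition \ref{2.7} is immediate for $w_1, w_2 \in \mathfrak{H}^0$: since $S(xFy) = x S_1(F) y \in \mathfrak{H}^0$, the map $S$ sends $\mathfrak{H}^0$ into itself, so applying $Z$ to both sides of the displayed identity and using $\overline{Z} = Z \circ S$ together with $Z(a \sh b) = Z(a) Z(b)$ on $\mathfrak{H}^0_{\sh}$ yields $\overline{Z}(w_1 \sv w_2) = \overline{Z}(w_1) \overline{Z}(w_2)$.

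To prove the key identity, first note the stability $\mathfrak{H}^1 \sv \mathfrak{H}^1 \subseteq \mathfrak{H}^1$, which is a short induction on $|w_1| + |w_2|$ (the correction terms $-\delta(\tilde w_i)\tau(u_i)u_j\tilde w_j$ in the $\sv$-recursion are only active when some $w_i = y$, and in that sub-case the resulting word still ends in $y$). Then induct on $|w_1| + |w_2|$: after the base case $w_1 = 1$, write $w_i = u_i \tilde w_i$ with $u_i \in \{x,y\}$, apply the $\sv$-recursion to the left side, and use (\ref{eq:2.1}) to pull $S$ past each leading letter as $S_1(u_i)$. Invoking the induction hypothesis expresses $S(w_1 \sv w_2)$ as a sum of terms $S_1(u_i) \bigl( S(\cdot) \sh S(\cdot) \bigr)$ plus the $S$-images of the correction terms. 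On the right, expand $S(w_1) \sh S(w_2) = S_1(u_1) S(\tilde w_1) \sh S_1(u_2) S(\tilde w_2)$ by bilinearity, decomposing each $S_1(y) = x+y$ into two letters, and apply the shuffle recursion letter-by-letter.

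The crucial matching is the following. Whenever $u_i = y$, the split $S_1(y) = x+y$ produces both an expected $y$-prefixed shuffle contribution and an extra $x$-prefixed one; the latter is precisely canceled by the $S$-image of the correction term $-\delta(\tilde w_j)\tau(u_j)u_i\tilde w_i$ (note $\tau(y) = x$), which activates exactly when $\tilde w_j = 1$, i.e. $w_j = y$. When neither $\tilde w_i$ equals $1$ the $\delta$-terms vanish and the shuffle expansion on the right collapses directly onto the $\sv$-recursion on the left. The main obstacle is organizing the case analysis cleanly across the four choices of $(u_1, u_2) \in \{x,y\}^2$ together with the sub-cases $\tilde w_i = 1$; the calculation carried out in the proof of Proposition \ref{2.4} provides a workable template, and in every case the induction hypothesis closes the argument.
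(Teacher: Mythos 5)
Your proposal is correct and follows essentially the same route as the paper: both reduce the statement to the identity $S(w_{1} \sv w_{2}) = S(w_{1}) \sh S(w_{2})$ on $\mathfrak{H}^1$ and prove it by induction on word lengths, using (\ref{eq:2.1}) to pull $S$ past leading letters and letting the $\delta$-correction terms cancel the extra $x$ coming from $S_{1}(y)=x+y$. The paper's only organizational difference is a double induction on $m$ and then $n$: since a length-one word of $\mathfrak{H}^1$ must equal $y$, all of the correction-term bookkeeping is confined to the base case $m=1$, and for $m,n\geq 2$ no $\delta$-terms appear at all.
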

\begin{proof}
It suffices to prove that 
\begin{align}
 S(w_{1} \sv w_{2}) &= S(w_{1}) \sh S(w_{2}) \label{eq:2.5} 
\end{align}
for words $w_{1}$, $w_{2} \in \mathfrak{H}^1$. 
We put $w_{1} = u_{1} u_{2} \cdots u_{m}$ 
and $w_{2} = v_{1} v_{2} \cdots v_{n}$ $(u_{i}, v_{i} \in \left\{x,y \right\})$. 
We prove (\ref{eq:2.5}) by induction on $m$. 
In order to simplify the proof, we set 
$u_{\vec{m}} := u_{2} u_{3} \cdots u_{m}$ and $v_{\vec{n}} := v_{2} v_{3} \cdots v_{n}$. 
(i) We prove the case $m=1$ by induction on $n$. 
\begin{align*}
S(u_{1} \sv v_{1}) &= S(y \sv y) = S(2y^2 -2xy) = 2(x+y)y-2xy =2y^2 \\
                   &=y \sh y =S(y) \sh S(y) = S(u_{1}) \sh S(v_{1}).
\end{align*}
So the case $n=1$ is valid. We assume the assertion is proven for $n-1$. 
Using $(\ref{eq:2.1})$ and the induction hypothesis, we have
\begin{align*}
\lefteqn{S(u_{1} \sv v_{1} v_{2} \cdots v_{n}) = S(y \sv v_{1} v_{\vec{n}})} \displaybreak[0]\\
 &= S \big( y v_{1} v_{\vec{n}} + v_{1} (y \sv v_{\vec{n}}) 
  - x v_{1} v_{\vec{n}} \big) \displaybreak[0]\\
 &= S_{1}(y) S_{1}(v_{1}) S(v_{\vec{n}}) + S_{1} (v_{1}) S(y \sv v_{\vec{n}}) 
  - S_{1}(x) S_{1}(v_{1}) S(v_{\vec{n}})  \displaybreak[0]\\
 &= (x+y) S_{1}(v_{1}) S(v_{\vec{n}}) + S_{1} (v_{1}) \big( S(y) \sh S(v_{\vec{n}})) 
  - x S_{1}(v_{1}) S(v_{\vec{n}} )  \displaybreak[0]\\
 &= y S_{1} (v_{1}) S(v_{\vec{n}}) + S_{1} (v_{1}) \big( y \sh S(v_{\vec{n}}) \big)  \displaybreak[0]\\
 &= y \sh  S_{1} (v_{1}) S(v_{\vec{n}}) \displaybreak[0]\\
 &= S(u_{1}) \sh S(v_{1} v_{\vec{n}}) = S(u_{1}) \sh S(v_{1} v_{2} \cdots v_{n}).
\end{align*}
Therefore we have the assertion for $n$. 
(ii) We assume the assertion is proven for $m-1$. We prove the assertion for $m$ by induction on $n$. 
The case $n=1$ is obvious by (i) and the commutativity of $\sh$, $\sv$. 
We assume the assertion is true for $n-1$. 
\begin{align*}
\lefteqn{S(u_{1} u_{2} \cdots u_{m} \sv v_{1} v_{2} \cdots v_{n}) 
         = S(u_{1} u_{\vec{m}} \sv v_{1} v_{\vec{n}}) } \\
 &= S \big( u_{1} ( u_{\vec{m}} \sv v_{1} v_{\vec{n}} ) 
  + v_{1} (u_{1} u_{\vec{m}} \sv v_{\vec{n}}) \big)  \displaybreak[0]\\
 &= S_{1} (u_{1}) S( u_{\vec{m}} \sv v_{1} v_{\vec{n}} )   
  + S_{1} (v_{1}) S( u_{1} u_{\vec{m}}  \sv v_{\vec{n}} ) \displaybreak[0]\\
 &= S_{1} (u_{1}) \big( S(u_{\vec{m}}) \sh S(v_{1} v_{\vec{n}}) \big)  
  + S_{1} (v_{1}) \big(S(u_{1} u_{\vec{m}}) \sh S(v_{\vec{n}}) \big) \displaybreak[0]\\
 &= S_{1} (u_{1}) \big( S(u_{\vec{m}}) \sh S_{1}(v_{1}) S( v_{\vec{n}}) \big) 
  + S_{1} (v_{1}) \big( S_{1} (u_{1}) S(u_{\vec{m}}) \sh S(v_{\vec{n}}) \big) \displaybreak[0]\\
 &= S_{1} (u_{1}) S( u_{\vec{m}} ) \sh S_{1} (v_{1}) S( v_{\vec{n}} )  \displaybreak[0]\\
 &= S(u_{1} u_{\vec{m}}) \sh S(v_{1} v_{\vec{n}}) 
  = S(u_{1} u_{2} \cdots u_{m}) \sh S(v_{1} v_{2} \cdots v_{n}). 
\end{align*}
This completes the proof. 
\end{proof}
Because the n-evaluation map $\overline{Z}$ is homomorphism with 
respect to $\hv$ and $\sv$, we have the following theorem. 
\begin{thm}[Finite double shuffle relations of NMZVs]
For $w_{1}, w_{2} \in \mathfrak{H}^0$, we have 
\[\overline{Z} (w_{1} \hv w_{2} - w_{1} \sv w_{2}) = 0. \]
\label{2.8}
\end{thm}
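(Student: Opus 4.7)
The plan is to observe that Theorem \ref{2.8} is an immediate corollary of the two homomorphism results already proved, namely Propositions \ref{2.4} and \ref{2.7}. The genuine content of the double shuffle picture for NMZVs lives in those two propositions; the present theorem is just the ``subtract and compare'' step.

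More explicitly, I would first note that since $\mathfrak{H}^{0}$ is a subalgebra of $\mathfrak{H}^{1}_{\hv}$ and of $\mathfrak{H}_{\sv}$ (as recorded after Propositions \ref{2.3} and \ref{2.6}), the two elements $w_{1} \hv w_{2}$ and $w_{1} \sv w_{2}$ both lie in $\mathfrak{H}^{0}$, so it is legitimate to apply $\overline{Z}$ to their difference. Then Proposition \ref{2.4} gives $\overline{Z}(w_{1} \hv w_{2}) = \overline{Z}(w_{1}) \overline{Z}(w_{2})$, while Proposition \ref{2.7} gives $\overline{Z}(w_{1} \sv w_{2}) = \overline{Z}(w_{1}) \overline{Z}(w_{2})$. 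Subtracting these two identities and invoking the $\mathbb{Q}$-linearity of $\overline{Z}$ yields
\[
\overline{Z}(w_{1} \hv w_{2} - w_{1} \sv w_{2}) = \overline{Z}(w_{1}) \overline{Z}(w_{2}) - \overline{Z}(w_{1}) \overline{Z}(w_{2}) = 0,
\]
which is exactly the claim.

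There is no genuine obstacle at this stage; the only points to double-check are the subalgebra containments ensuring the arguments lie in the domain $\mathfrak{H}^{0}$ of $\overline{Z}$, and the fact that both products were shown to compute the same real number $\overline{Z}(w_{1}) \overline{Z}(w_{2})$. The substantive work, namely the intertwining identities $S(w_{1} \hv w_{2}) = S(w_{1}) * S(w_{2})$ and $S(w_{1} \sv w_{2}) = S(w_{1}) \sh S(w_{2})$ via the map $S$ together with Lemma \ref{2.5}, has already been carried out.
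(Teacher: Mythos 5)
Your proof is correct and is essentially identical to the paper's own argument: the paper likewise deduces Theorem \ref{2.8} immediately from the fact that $\overline{Z}$ is a homomorphism on $\mathfrak{H}^{0}$ with respect to both $\hv$ and $\sv$ (Propositions \ref{2.4} and \ref{2.7}), the subtraction step being trivial by linearity. Your extra remark checking the subalgebra containments is a harmless and correct elaboration of what the paper leaves implicit.
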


\subsection{Extended double shuffle relations of NMZVs}
In this subsection, we generalize Theorem \ref{2.8}. 
In the following lemma, we introduce the inverse of $S$. 
\begin{lem}\label{2.9}
$(i)$ Let $S_{2} \in \mathfrak{H}$ be defined by $S_{2} (1) = 1$, $S_{2} (x) = x$ and $S_{2} (y) = y-x$. 
And we define $\mathbb{Q}$-linear map $\tilde{S}:\mathfrak{H}^1 \longrightarrow \mathfrak{H}^1$ by 
\[\tilde{S}(1) = 1 \quad and \quad \tilde{S}(Fy) := S_{2}(F)y \]
for words $F \in \mathfrak{H}$. 
Then we have $\tilde{S} \circ S = S \circ \tilde{S} = id$ on $\mathfrak{H}^1$. 

$(ii)$ For $w_{1}$, $w_{2} \in \mathfrak{H}^1$, we have 
\begin{align*}
 \tilde{S} (w_{1} * w_{2}) &= \tilde{S} (w_{1}) \hv \tilde{S} (w_{2}),  \\
 \tilde{S} (w_{1} \sh w_{2}) &= \tilde{S} (w_{1}) \sv \tilde{S} (w_{2}).
\end{align*}
\end{lem}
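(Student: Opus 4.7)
The plan for $(i)$ is to verify first that $S_{1}$ and $S_{2}$, regarded as $\mathbb{Q}$-algebra endomorphisms of $\mathfrak{H}$, are mutually inverse automorphisms. Both fix $x$, so one only needs to check $S_{1}(S_{2}(y)) = S_{1}(y-x) = (x+y)-x = y$ and $S_{2}(S_{1}(y)) = S_{2}(x+y) = x+(y-x) = y$; hence $S_{1} \circ S_{2} = S_{2} \circ S_{1} = \mathrm{id}_{\mathfrak{H}}$. The assertion for $\tilde{S}$ and $S$ on $\mathfrak{H}^{1}$ then follows by unpacking definitions: for any word $Fy \in \mathfrak{H}y$,
\[
\tilde{S}(S(Fy)) = \tilde{S}(S_{1}(F)y) = S_{2}(S_{1}(F))y = Fy,
\]
\[
S(\tilde{S}(Fy)) = S(S_{2}(F)y) = S_{1}(S_{2}(F))y = Fy,
\]
and both $S$ and $\tilde{S}$ fix $1$, so by $\mathbb{Q}$-linearity $\tilde{S} \circ S = S \circ \tilde{S} = \mathrm{id}$ on all of $\mathfrak{H}^{1}$.

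For $(ii)$, I would leverage the homomorphism properties of $S$ already established in the word-level forms $(\ref{eq:2.4})$ and $(\ref{eq:2.5})$, combined with $(i)$. Given $w_{1}, w_{2} \in \mathfrak{H}^{1}$, set $v_{i} := \tilde{S}(w_{i}) \in \mathfrak{H}^{1}$; by $(i)$ we have $S(v_{i}) = w_{i}$, so
\[
S\bigl(\tilde{S}(w_{1}) \hv \tilde{S}(w_{2})\bigr) = S(v_{1}) * S(v_{2}) = w_{1} * w_{2},
\]
\[
S\bigl(\tilde{S}(w_{1}) \sv \tilde{S}(w_{2})\bigr) = S(v_{1}) \sh S(v_{2}) = w_{1} \sh w_{2}.
\]
Applying $\tilde{S}$ to both sides and invoking $\tilde{S} \circ S = \mathrm{id}$ from $(i)$ yields the two claimed identities. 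For this last step one needs $\tilde{S}(w_{1}) \hv \tilde{S}(w_{2})$ and $\tilde{S}(w_{1}) \sv \tilde{S}(w_{2})$ to lie in $\mathfrak{H}^{1}$, which is immediate because $\mathfrak{H}^{1}$ is a subalgebra under both $\hv$ and $\sv$.

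There is essentially no substantive obstacle once $(i)$ has been isolated as a formal inversion statement about the algebra automorphisms $S_{1}$ and $S_{2}$: part $(ii)$ then reduces to a mechanical dualization of the already proved identities $(\ref{eq:2.4})$ and $(\ref{eq:2.5})$. The only point requiring care is the bookkeeping that keeps every intermediate object inside $\mathfrak{H}^{1}$, where $\tilde{S}$ is defined, and this is handled by the closure of $\mathfrak{H}^{1}$ under all four products involved.
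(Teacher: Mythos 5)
Your proposal is correct and follows essentially the same route as the paper: part $(i)$ is the same computation on words $Fy$ (the paper leaves the inversion $S_{1}\circ S_{2}=S_{2}\circ S_{1}=\mathrm{id}$ implicit where you check it on generators), and part $(ii)$ is exactly the intended deduction from $(\ref{eq:2.4})$, $(\ref{eq:2.5})$ and $(i)$, which the paper states as ``clear'' and you spell out.
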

\begin{proof}
(i) By definition, we have $\tilde{S} \circ S (1) = S \circ \tilde{S} (1) = 1$. 
Let $w$ be contained in $\mathfrak{H}^1\backslash \left\{ 1 \right\}$. 
Then we can write $w=w_{1} y \,(w_{1} \in \mathfrak{H})$, and we have 
\[\tilde{S} \circ S (w) = \tilde{S} \circ S (w_{1} y) 
 = \tilde{S} (S_{1} (w_{1}) y ) = S_{2} (S_{1} (w_{1}) ) y = w_{1} y = w,  \]
\[ S \circ \tilde{S} (w) = S \circ \tilde{S} (w_{1} y) 
 = S (S_{2} (w_{1}) y ) = S_{1} (S_{2} (w_{1}) ) y = w_{1} y= w. \]
This completes the proof of (i). 
(ii) This is clear from (\ref{eq:2.4}), (\ref{eq:2.5}) and (i). 
\end{proof}
By (i) of Lemma \ref{2.9}, we can rewrite $\tilde{S}$ by $S^{-1}$. 
Then (ii) of Lemma \ref{2.9} can be restated as follows: 
\begin{align}
 S^{-1} (w_{1} * w_{2}) &= S^{-1} (w_{1}) \hv S^{-1} (w_{2}) , \label{eq:2.6} \\
 S^{-1} (w_{1} \sh w_{2}) &= S^{-1} (w_{1}) \sv S^{-1} (w_{2}). \label{eq:2.7} 
\end{align}
Using (\ref{eq:2.6}), we give the proof of Proposition \ref{2.3}
\begin{proof}[Proof of Proposition $\ref{2.3}$]
By using (\ref{eq:2.6}) and the commutativity of the harmonic product $*$, we have 
\begin{align*}
 w_{1} \hv w_{2} &= S^{-1} \big( S( w_{1} ) \big) \hv S^{-1} \big( S( w_{2} ) \big) 
                  = S^{-1} \big( S( w_{1} ) * S( w_{2} ) \big) \\
                 &= S^{-1} \big( S( w_{2} ) * S( w_{1} ) \big) 
                  = S^{-1} \big( S( w_{2} ) \big) \hv S^{-1} \big( S( w_{1} ) \big) 
                  = w_{2} \hv w_{1}. 
\end{align*}
So the commutativity of n-harmonic prodct $\hv$ follows. 
We next prove the associativity of n-harmonic product $\hv$ by using (\ref{eq:2.6}) and 
the associativity of the harmonic product $*$. 
\begin{align*}
 w_{1} \hv ( w_{2} \hv w_{3} ) 
 &= S^{-1} \big( S( w_{1} ) \big) \hv \Big( S^{-1} \big( S( w_{2} ) \big) \hv S^{-1} \big( S( w_{3} ) \big) \Big) \\
 &= S^{-1} \big( S( w_{1} ) \big) \hv S^{-1} \big( S( w_{2} ) * S( w_{3} ) \big)  \\
 &= S^{-1} \Big( S( w_{1} ) * \big( S( w_{2} ) * S( w_{3} ) \big) \Big)  \\
 &= S^{-1}  \Big( \big( S( w_{1} ) * S( w_{2} ) \big) * S( w_{3} ) \Big)  \\
 &= S^{-1} \big( S( w_{1} ) * S( w_{2} ) \big) \hv S^{-1} \big( S( w_{3} ) \big) \\
 &= \Big( S^{-1} \big( S( w_{1} ) \big) \hv S^{-1} \big( S( w_{2} ) \big) \Big) \hv w_{3} \\
 &= (w_{1} \hv  w_{2}) \hv w_{3}. 
\end{align*}
This completes the proof. 
\end{proof}
\begin{lem}\label{2.10}
Let $\circ = * $ or $\sh$. A word $y^m w$ $(m \geq 0 , w \in \mathfrak{H}^0)$ of $\mathfrak{H}^1$ is represented uniquely by
\begin{align}
 y^m w &= w_{0} + w_{1} \cv y + w_{2} \cv y^{\cv 2} + \cdots + w_{m} \cv y^{\cv m} \quad (w_{i} \in \mathfrak{H}^0), 
\label{eq:2.8}
\end{align}
i.e., we have $\mathfrak{H}^0_{\cv}[y] \simeq \mathfrak{H}^1_{\cv}$. 
\end{lem}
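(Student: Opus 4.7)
The plan is to recognize the assertion as the statement that the $\mathbb{Q}$-algebra homomorphism $\iota_{\cv}\colon \mathfrak{H}^{0}_{\cv}[T]\longrightarrow\mathfrak{H}^{1}_{\cv}$ sending $T\mapsto y$ and extending the inclusion $\mathfrak{H}^{0}\hookrightarrow\mathfrak{H}^{1}$ is bijective. The key tool is the vector-space grading $\mathfrak{H}^{1}=\bigoplus_{m\geq 0}y^{m}\mathfrak{H}^{0}$ obtained by reading off the number of leading $y$'s in each word; write $\pi_{m}\colon\mathfrak{H}^{1}\to y^{m}\mathfrak{H}^{0}$ for the corresponding projection.

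The core step is the leading-term estimate
\[
w\cv y^{\cv m} \;=\; m!\,y^{m}w+R_{m}(w),\qquad R_{m}(w)\in\bigoplus_{m'<m}y^{m'}\mathfrak{H}^{0},
\]
for every $w\in\mathfrak{H}^{0}$ and $m\geq 0$. For $\cv=\sh$ this reduces, via the identity $y^{\sh m}=m!\,y^{m}$ (a short induction), to the decomposition $w\sh y^{m}=\sum_{j=0}^{m}y^{j}x(\tilde w\sh y^{m-j})$ when $w=x\tilde w$ (the case $w\in\mathbb{Q}$ being immediate), which in turn follows by induction on $m$ from the recursive rule for $\sh$; each summand lies in $y^{j}\mathfrak{H}^{0}$ because $x(\tilde w\sh y^{m-j})$ begins with $x$ and ends in $y$. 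For $\cv=*$ I argue by induction on $m$: writing $w*y^{*m}=(w*y^{*(m-1)})*y$ and unfolding the recursive rule for $*$ gives $(y^{m-1}w)*y=m\,y^{m}w+(\text{lower-}y^{m'}\text{ terms})$, since each ``merge'' contribution $z_{k+l}(\cdot)$ strictly decreases the leading-$y$ count; combined with the inductive expression for $w*y^{*(m-1)}$ this yields the claim, every word produced ending in $y$ so as to remain in $\bigoplus_{m'}y^{m'}\mathfrak{H}^{0}$.

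Uniqueness is then immediate: in any relation $\sum_{i=0}^{N}w_{i}\cv y^{\cv i}=0$ with $w_{N}$ the highest potentially nonzero coefficient, applying $\pi_{N}$ kills every $i<N$ term and collapses the equation to $N!\,y^{N}w_{N}=0$, forcing $w_{N}=0$; descending induction finishes. Existence is by induction on $m$: the case $m=0$ is $w=w$, and for $m\geq 1$ the leading-term estimate yields
\[
y^{m}w \;=\; \tfrac{1}{m!}\bigl(w\cv y^{\cv m}-R_{m}(w)\bigr),
\]
whose summands in $R_{m}(w)$ are of the form $y^{m'}u$ with $m'<m$ and $u\in\mathfrak{H}^{0}$; the induction hypothesis rewrites each such summand in the desired form, and the pieces assemble into a decomposition with $w_{m}=w/m!$. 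The main obstacle is the $\cv=*$ case of the leading-term estimate, where the merge terms force careful tracking of leading-$y$ counts; the $\cv=\sh$ case is cleaner because shuffle preserves the letters of the original words individually.
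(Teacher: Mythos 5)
There is a genuine gap: you have proved the wrong statement. In the lemma, $\cv$ denotes $\overline{\circ}$, i.e.\ the NMZV-side product $\hv$ (when $\circ=*$) or $\sv$ (when $\circ=\sh$), so the assertion is $\mathfrak{H}^0_{\hv}[y]\simeq\mathfrak{H}^1_{\hv}$ and $\mathfrak{H}^0_{\sv}[y]\simeq\mathfrak{H}^1_{\sv}$. Your argument, however, works throughout with the classical products: you invoke $y^{\sh m}=m!\,y^{m}$, the decomposition $w\sh y^{m}=\sum_{j}y^{j}x(\tilde w\sh y^{m-j})$, and a recursion for $*$ whose only correction is the merge term $z_{k+l}(\cdot)$. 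None of these is available for the barred products: already $y\sv y=2y^{2}-2xy$ (computed in the proof of Proposition \ref{2.7}), so $y^{\sv m}\neq m!\,y^{m}$ for $m=2$; and the recursions for $\hv$ and $\sv$ carry the extra terms $-z_{k+l}(w_{1}\hv w_{2})$ and $-\delta(w_{1})\tau(u_{1})u_{2}w_{2}-\delta(w_{2})\tau(u_{2})u_{1}w_{1}$, the latter of which flips letters via $\tau$ and is not obviously of lower order in the leading-$y$ grading. What you have written is essentially a proof of the classical isomorphism $\mathfrak{H}^0_{\circ}[y]\simeq\mathfrak{H}^1_{\circ}$, which the paper does not reprove but cites (Corollary 5 of [IKZ], together with [H2] and [R]) and uses as the \emph{input} to this lemma.

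The paper's proof is a transport of structure: since $S(w_{1}\cv w_{2})=S(w_{1})\circ S(w_{2})$ by (\ref{eq:2.4}) and (\ref{eq:2.5}), with inverse relations (\ref{eq:2.6}) and (\ref{eq:2.7}), and since $S^{\pm1}$ preserve $\mathfrak{H}^0$, one applies the classical decomposition to $S(y^{m}w)=(y+x)^{m}S(w)$ and pulls it back by $S^{-1}$ to get existence; uniqueness is pushed forward by $S$ in the same way. Your leading-term strategy could in principle be carried out directly for $\hv$ and $\sv$ (an estimate of the shape $w\cv y^{\cv m}=m!\,y^{m}w+R$ with $R\in\bigoplus_{m'<m}y^{m'}\mathfrak{H}^0$ does appear to survive the correction terms), and that would give a proof independent of the cited result; but it requires a separate induction controlling the new terms, which your write-up does not supply.
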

\begin{proof}
We first prove that $y^m w$ can be represented as (\ref{eq:2.8}). 
By Corollary 5 of [IKZ], we have 
\[(y+x)^m S(w) = \sum_{i=0}^{m} v_{i} \circ y^{\circ i} \quad (v_{i} \in \mathfrak{H}^0). \;\; \] 
Using $(\ref{eq:2.6})$ or $(\ref{eq:2.7})$, we obtain  
\[\;\; y^m w = \sum_{i=0}^{m} S^{-1}(v_{i}) \cv y^{\cv i}.\]
(We have $S^{-1}(w_{1} w_{2}) = S_{2}(w_{1}) S^{-1}(w_{2})$ 
for $w_{1} \in \mathfrak{H}$, $w_{2} \in \mathfrak{H}^1$.) 
Therefore, the first assertion follows from $S^{-1}(\mathfrak{H}^0) \subset \mathfrak{H}^0$. 
We next prove the uniqueness of representation (\ref{eq:2.8}). 
We put
\[\qquad \sum_{i=0}^{m} w_{i} \cv y^{\cv i} = \sum_{i=0}^{m} v_{i} \cv y^{\cv i} \quad (w_{i},v_{i} \in \mathfrak{H}^0).\]
Using (\ref{eq:2.4}) or (\ref{eq:2.5}), we have 
\[\sum_{i=0}^{m} S(w_{i}) \circ y^{\circ i} = \sum_{i=0}^{m} S(v_{i}) \circ y^{\circ i}.\qquad \qquad\]
By $\mathfrak{H}^0_{\circ}[y] \simeq \mathfrak{H}^1_{\circ}$ (see [H2] and [R]), 
we have $S(w_{i}) = S(v_{i})$ for $i=0,1,\ldots,m$. 
So we have the second assertion. 
\end{proof}

\begin{prop}
We have two algebra homomorphisms
\[ \overline{Z}^{\hv}:\mathfrak{H}^{1}_{\hv} \longrightarrow \mathbb{R}[T] 
 \quad and \quad
   \overline{Z}^{\sv}:\mathfrak{H}^{1}_{\sv} \longrightarrow \mathbb{R}[T]\]
which are uniquely characterized by the properties that they both extend the n-evaluation map 
$\overline{Z}:\mathfrak{H}^0 \longrightarrow \mathbb{R}$ and send $y$ to $T$.
\label{2.11}
\end{prop}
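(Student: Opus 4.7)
The plan is to transport the IKZ proposition at the start of Section 2 across the bijection $S:\mathfrak{H}^{1}\to\mathfrak{H}^{1}$. I would define
\[\overline{Z}^{\hv} := Z^{*} \circ S \qquad \text{and} \qquad \overline{Z}^{\sv} := Z^{\sh} \circ S,\]
and then verify the four required properties: being an algebra homomorphism, landing in $\mathbb{R}[T]$, extending $\overline{Z}$, and sending $y$ to $T$. The uniqueness would be read off from Lemma \ref{2.10}.

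That these compositions are algebra homomorphisms follows by stacking two layers. By identity (\ref{eq:2.4}), $S:\mathfrak{H}^{1}_{\hv}\to\mathfrak{H}^{1}_{*}$ is a $\mathbb{Q}$-algebra homomorphism --- in fact an isomorphism, by Lemma \ref{2.9}(i) --- and $Z^{*}:\mathfrak{H}^{1}_{*}\to\mathbb{R}[T]$ is an algebra homomorphism by the IKZ proposition. The analogous statements for $\sv$ follow from (\ref{eq:2.5}) and $Z^{\sh}$. For the two characterizing properties, I would note first that $S(y)=S_{1}(1)y=y$, whence $\overline{Z}^{\hv}(y)=Z^{*}(y)=T$ and $\overline{Z}^{\sv}(y)=Z^{\sh}(y)=T$. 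Next, for any $w=xFy\in\mathfrak{H}^{0}$ with $F\in\mathfrak{H}$, the relation (\ref{eq:2.1}) gives $S(w)=xS_{1}(F)y\in x\mathfrak{H}y\subseteq\mathfrak{H}^{0}$, so $S(\mathfrak{H}^{0})\subseteq\mathfrak{H}^{0}$; combined with $Z^{*}|_{\mathfrak{H}^{0}}=Z$ and the identity $\overline{Z}=Z\circ S$ on $\mathfrak{H}^{0}$, this yields $\overline{Z}^{\hv}(w)=Z^{*}(S(w))=Z(S(w))=\overline{Z}(w)$, and the same reasoning covers $\overline{Z}^{\sv}$.

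For uniqueness I would invoke Lemma \ref{2.10}: every element $f\in\mathfrak{H}^{1}$ admits a unique expansion $f=\sum_{i=0}^{m} w_{i}\cv y^{\cv i}$ with $w_{i}\in\mathfrak{H}^{0}$, where $\cv$ is $\hv$ or $\sv$ respectively. Any algebra homomorphism $\Phi:\mathfrak{H}^{1}_{\cv}\to\mathbb{R}[T]$ that extends $\overline{Z}$ and sends $y$ to $T$ must then satisfy
\[\Phi(f)=\sum_{i=0}^{m}\overline{Z}(w_{i})T^{i},\]
pinning $\Phi$ down completely, and in particular making it equal to the candidate constructed above. I do not anticipate a genuine obstacle here: the only points that require care are that $S$ fixes $y$ and preserves $\mathfrak{H}^{0}$ (so the composition really does extend $\overline{Z}$ with the prescribed value at $y$), and that Lemma \ref{2.10} is available for \emph{both} products $\hv$ and $\sv$, which is exactly how that lemma is stated.
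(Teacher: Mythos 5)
Your proof is correct, but the existence half goes by a genuinely different route from the paper's. The paper's own proof is a one-liner: since $\overline{Z}$ is an algebra homomorphism on $\mathfrak{H}^0$ with respect to $\hv$ and $\sv$ (Propositions \ref{2.4} and \ref{2.7}) and $\mathfrak{H}^1_{\cv}\simeq\mathfrak{H}^0_{\cv}[y]$ (Lemma \ref{2.10}), the universal property of the polynomial algebra both produces and uniquely determines the extension sending $y$ to $T$; concretely, $\overline{Z}^{\cv}\bigl(\sum_i w_i\cv y^{\cv i}\bigr):=\sum_i\overline{Z}(w_i)T^i$. You instead construct the maps explicitly as $Z^{*}\circ S$ and $Z^{\sh}\circ S$ and verify the four characterizing properties, reserving Lemma \ref{2.10} for uniqueness alone. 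Both arguments are sound, and your verifications ($S(y)=y$, $S(\mathfrak{H}^0)\subseteq\mathfrak{H}^0$, $\overline{Z}=Z\circ S$ on $\mathfrak{H}^0$, and (\ref{eq:2.4})/(\ref{eq:2.5}) making $S$ an algebra homomorphism for the respective products) are all correct. What your route buys is that the identities $\overline{Z}^{\hv}=Z^{*}\circ S$ and $\overline{Z}^{\sv}=Z^{\sh}\circ S$ --- which the paper derives as a separate remark immediately after the proposition, and which are what actually get used in the proof of Theorem \ref{2.12} --- hold by definition rather than needing a further appeal to uniqueness. What the paper's route buys is that the NMZV regularization is built intrinsically from the NMZV data, in exact parallel with the IKZ construction for MZVs, without presupposing the regularized maps $Z^{*}$ and $Z^{\sh}$ on the MZV side.
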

\begin{proof}
The assertion follows because $\overline{Z}$ is homomorphism respect to $\hv$, $\sv$ and we have isomorphisms 
$\mathfrak{H}^0_{\hv}[y] \simeq \mathfrak{H}^1_{\hv}$, $\mathfrak{H}^0_{\sv}[y] \simeq \mathfrak{H}^1_{\sv}$. 
\end{proof}

The $\mathbb{Q}$-algebra homomorphisms $\overline{Z}^{\hv}$, $\overline{Z}^{\sv}$ have the following relations:
\[\overline{Z}^{\hv} = Z^{*} \circ S, \quad \overline{Z}^{\sv} = Z^{\sh} \circ S. \]
($\circ$ means composition) 
Indeed, $ Z^{*} \circ S $ and $Z^{\sh} \circ S$ satisfy the conditions of Proposition \ref{2.11}. 

\begin{thm}[Extended double shuffle relations of NMZVs]\label{2.12}
For $w_{1} \in \mathfrak{H}^1$ and $w_{2} \in \mathfrak{H}^0$, we have 
\[\overline{Z}^{\hv} ( w_{1} \sv w_{2} - w_{1} \hv w_{2} ) = 0
 \quad and \quad 
 \overline{Z}^{\sv} ( w_{1} \sv w_{2} - w_{1} \hv w_{2} ) = 0.\]
\end{thm}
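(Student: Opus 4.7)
The strategy is to transport the identities through the map $S$ and reduce to the extended double shuffle relations of MZVs, Theorem~\ref{2.2}. Everything needed has essentially been set up: the intertwining relations \eqref{eq:2.4} and \eqref{eq:2.5} transform the NMZV products $\hv$, $\sv$ into the MZV products $*$, $\sh$ under $S$, and the paragraph just before the statement records the factorizations $\overline{Z}^{\hv}=Z^{*}\circ S$ and $\overline{Z}^{\sv}=Z^{\sh}\circ S$.

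First I would check that $S$ sends $\mathfrak{H}^{1}$ to $\mathfrak{H}^{1}$ and $\mathfrak{H}^{0}$ to $\mathfrak{H}^{0}$. The first is immediate from the defining rule $S(Fy)=S_{1}(F)y$, which always produces an element of $\mathfrak{H}y$. For the second, since $S_{1}$ fixes $x$, a word of the form $xFy$ is sent to $S_{1}(x)S_{1}(F)y=xS_{1}(F)y\in x\mathfrak{H}y$. Thus for $w_{1}\in\mathfrak{H}^{1}$ and $w_{2}\in\mathfrak{H}^{0}$ we have $S(w_{1})\in\mathfrak{H}^{1}$ and $S(w_{2})\in\mathfrak{H}^{0}$.

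Extending \eqref{eq:2.4} and \eqref{eq:2.5} to arbitrary elements by $\mathbb{Q}$-bilinearity, I would then compute
\[
S(w_{1}\sv w_{2}-w_{1}\hv w_{2}) = S(w_{1})\sh S(w_{2}) - S(w_{1})*S(w_{2}).
\]
Applying the factorizations $\overline{Z}^{\hv}=Z^{*}\circ S$ and $\overline{Z}^{\sv}=Z^{\sh}\circ S$, both claimed identities reduce to
\[
Z^{*}\bigl(S(w_{1})\sh S(w_{2}) - S(w_{1})*S(w_{2})\bigr) = 0
\]
and the analogous equation with $Z^{\sh}$ in place of $Z^{*}$. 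These are exactly the content of Theorem~\ref{2.2} of [IKZ], applied to the pair $(S(w_{1}),S(w_{2}))\in\mathfrak{H}^{1}\times\mathfrak{H}^{0}$, and the theorem follows.

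I do not foresee a real obstacle: the entire argument amounts to conjugating the IKZ result by $S$, using infrastructure already built in the paper. The only points deserving a brief remark are the $\mathbb{Q}$-linear extension of \eqref{eq:2.4} and \eqref{eq:2.5} from single words to arbitrary elements, and the subspace preservation $S(\mathfrak{H}^{0})\subseteq\mathfrak{H}^{0}$; both are routine.
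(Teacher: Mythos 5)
Your proposal is correct and follows exactly the paper's own argument: apply the factorizations $\overline{Z}^{\hv}=Z^{*}\circ S$, $\overline{Z}^{\sv}=Z^{\sh}\circ S$ together with \eqref{eq:2.4}, \eqref{eq:2.5} and the inclusions $S(\mathfrak{H}^{1})\subset\mathfrak{H}^{1}$, $S(\mathfrak{H}^{0})\subset\mathfrak{H}^{0}$, then invoke Theorem~\ref{2.2}. Your explicit check of the subspace preservation is a minor elaboration of a fact the paper simply asserts.
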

\begin{proof}
By using (\ref{eq:2.4}), (\ref{eq:2.5}) and the relation $\overline{Z}^{\hv} = Z^{*} \circ S$, we have 
\begin{align*}
 \overline{Z}^{\hv} (w_{1} \sv w_{0} - w_{1} \hv w_{0}) 
   &= Z^{*} \circ S(w_{1} \sv w_{0} - w_{1} \hv w_{0})  \\
   &= Z^{*} \Big( S(w_{1}) \sh S(w_{0}) - S(w_{1}) * S(w_{0}) \Big) \\
   &= 0.
\end{align*}
The last equality follows by Theorem \ref{2.2} and the fact $S(\mathfrak{H}^1) \subset \mathfrak{H}^1$, 
$S(\mathfrak{H}^0) \subset \mathfrak{H}^0$. The other identity can be proven in the same way. 
\end{proof}

\section{Application}
In [H1], Hoffman proved the following theorem. 
\begin{thm}[{[}H1{]}]
For positive integers $k_{1}$, $k_{2}$, $\ldots$, $k_{n}$ and $k_{1} \geq 2$, we have 
\begin{eqnarray*}
\lefteqn{ \sum_{i=1}^{n} \zeta (k_{1}, \cdots, k_{i-1}, k_{i}+1, k_{i+1}, \cdots, k_{n} ) } \\
 & & \qquad \qquad 
   =\sum_{ \begin{subarray}{c} 1 \leq i \leq n \\ k_{i} \geq 2 \end{subarray} }
    \sum_{j=0}^{ k_{i} - 2 } \zeta (k_{1}, \cdots, k_{i-1}, k_{i}-j, j+1, k_{i+1}, \cdots, k_{n} ).
\end{eqnarray*}
\end{thm}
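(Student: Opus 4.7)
The plan is to apply the extended double shuffle relation (Theorem \ref{2.2}) to the pair $w_1 = z_1 = y \in \mathfrak{H}^1$ and $w_2 = z_{k_1} z_{k_2} \cdots z_{k_n} \in \mathfrak{H}^0$, and then read off Hoffman's identity from the resulting equality.

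First, I would expand the harmonic product $z_1 * z_{k_1} \cdots z_{k_n}$ by iterating the defining rule. A straightforward induction on $n$ yields
\[
z_1 * z_{k_1} \cdots z_{k_n} = \sum_{i=0}^{n} z_{k_1} \cdots z_{k_i} z_1 z_{k_{i+1}} \cdots z_{k_n} + \sum_{i=1}^{n} z_{k_1} \cdots z_{k_{i-1}} z_{k_i+1} z_{k_{i+1}} \cdots z_{k_n},
\]
the first sum being the $n+1$ ``insertions'' of $z_1$ and the second the ``merging'' terms produced by the last clause of the harmonic product. Next, I would compute $z_1 \sh z_{k_1} \cdots z_{k_n}$ by writing each $z_{k_i} = x^{k_i-1}y$ and shuffling the single letter $y$ into $x^{k_1-1}y x^{k_2-1}y \cdots x^{k_n-1}y$. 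The inserted $y$ lands either inside some block $x^{k_i-1}y$ at position $j$ (for $j = 0, 1, \ldots, k_i-1$), splitting it into $z_{j+1} z_{k_i-j}$, or after the last letter, which gives the tail term $z_{k_1} \cdots z_{k_n} z_1$. Grouping by $j=0$ and $j \geq 1$ (and being careful that the $j=0$ cases are meaningful only for $i \geq 1$ to avoid overlap at block boundaries) yields
\[
z_1 \sh z_{k_1} \cdots z_{k_n} = \sum_{i=0}^{n} z_{k_1} \cdots z_{k_i} z_1 z_{k_{i+1}} \cdots z_{k_n} + \sum_{\substack{1 \leq i \leq n \\ k_i \geq 2}} \sum_{j=1}^{k_i-1} z_{k_1} \cdots z_{k_{i-1}} z_{j+1} z_{k_i-j} z_{k_{i+1}} \cdots z_{k_n}.
\]

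Third, I would subtract: the ``insertion'' sums agree between the two products and cancel, leaving
\[
z_1 \sh w_2 - z_1 * w_2 = \sum_{\substack{1 \leq i \leq n \\ k_i \geq 2}} \sum_{j=1}^{k_i-1} z_{k_1} \cdots z_{j+1} z_{k_i-j} \cdots z_{k_n} - \sum_{i=1}^{n} z_{k_1} \cdots z_{k_i+1} \cdots z_{k_n}.
\]
Every surviving monomial lies in $\mathfrak{H}^0$ since $k_1 \geq 2$, so applying Theorem \ref{2.2} (with $Z^*$) collapses $Z^* = Z$ on these terms and gives the equality $Z(\text{harmonic side}) = Z(\text{shuffle side})$. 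Reindexing $j \mapsto k_i - j$ in the shuffle double sum (equivalently, setting $j' = k_i - j - 1$ so $j'$ runs over $0, 1, \ldots, k_i - 2$) converts the pair $(z_{j+1}, z_{k_i-j})$ into $(z_{k_i - j'}, z_{j'+1})$, exactly the shape appearing on the right-hand side of Hoffman's theorem.

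The main obstacle is the combinatorial bookkeeping in computing $z_1 \sh w_2$: one must carefully enumerate the insertion positions of $y$ into the concatenation $x^{k_1-1}y \cdots x^{k_n-1}y$ without over- or under-counting the segment boundaries, and then cleanly match the ``$j=0$'' insertions against the insertion terms of the harmonic product. Once the two products are written in the common form above, everything else is routine reindexing.
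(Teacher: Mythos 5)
Your proof is correct. Note, however, that the paper does not actually prove this statement: it is quoted from Hoffman's original paper [H1], where it is obtained by a direct manipulation (partial fractions and telescoping) of the defining nested series. What you have written is instead the algebraic derivation in the style of [IKZ]: take $w_{1} = y \in \mathfrak{H}^{1}$ and $w_{2} = z_{k_{1}} \cdots z_{k_{n}} \in \mathfrak{H}^{0}$, expand both products, observe that the only monomials outside $\mathfrak{H}^{0}$ --- those beginning with $z_{1}$ --- occur identically in $y * w_{2}$ and $y \sh w_{2}$ and hence cancel in the difference, and then apply $Z^{*}$ from Theorem \ref{2.2}, which restricts to $Z$ on $\mathfrak{H}^{0}$. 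Your two expansions check out (including the multiplicity-two occurrence of the words $\cdots z_{k_{i}} z_{1} \cdots$ in the shuffle, once as an ``insertion'' term and once as the $j = k_{i}-1$ splitting term), and the reindexing $j \mapsto k_{i}-j-1$ does produce Hoffman's right-hand side. The payoff of your route is that it is exactly the template the paper uses for its new result, Theorem \ref{3.3}: there the author computes $y \sv z_{k_{1}} \cdots z_{k_{n}} - y \hv z_{k_{1}} \cdots z_{k_{n}}$ via Lemma \ref{3.2}, checks that the difference lies in $\mathfrak{H}^{0}$, and invokes the extended double shuffle relations for NMZVs (Theorem \ref{2.12}); your argument is the MZV shadow of that proof. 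Hoffman's original series argument is more elementary (no regularization is needed), while yours makes transparent why an identity of the same shape must hold in the non-strict setting.
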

In this section, we prove an analogue of Hoffman's relations for NMZVs: 
\begin{thm}\label{3.3}
For positive integers $k_{1}$, $k_{2}$, $\ldots$, $k_{n}$ and $k_{1} \geq 2$, we have 
\begin{eqnarray*}
\lefteqn{ \sum_{i=1}^{n} \big(k_{i} - 1 + \delta_{ni}  \big) \,
 \zs (k_{1}, \cdots, k_{i-1}, k_{i}+1, k_{i+1}, \cdots, k_{n} ) } \\
 & & \qquad \qquad 
   =\sum_{ \begin{subarray}{c} 1 \leq i \leq n \\ k_{i} \geq 2 \end{subarray} }
    \sum_{j=0}^{ k_{i} - 2 } \zs (k_{1}, \cdots, k_{i-1}, k_{i}-j, j+1, k_{i+1}, \cdots, k_{n} ).
\end{eqnarray*}
\end{thm}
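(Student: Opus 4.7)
The plan is to apply Theorem~\ref{2.12} with $w_{1} = y$ and $w_{2} = w := z_{k_{1}} z_{k_{2}} \cdots z_{k_{n}} \in \mathfrak{H}^{0}$, which yields $\overline{Z}^{\hv}(y \sv w - y \hv w) = 0$. I will compute $y \hv w - y \sv w$ explicitly and check that it lies in $\mathfrak{H}^{0}$, so that $\overline{Z}^{\hv}$ reduces to $\overline{Z}$ on this element; matching coefficients will then give Theorem~\ref{3.3}.

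First, a direct induction on $n$ from the $\hv$-recursion gives
\[
y \hv w = \sum_{i=0}^{n} z_{k_{1}} \cdots z_{k_{i}} z_{1} z_{k_{i+1}} \cdots z_{k_{n}} \;-\; \sum_{i=1}^{n} z_{k_{1}} \cdots z_{k_{i}+1} \cdots z_{k_{n}},
\]
whose only non-admissible summand is the $i=0$ term $z_{1} z_{k_{1}} \cdots z_{k_{n}}$. The main combinatorial step is the expansion of $y \sv w$, which I would approach indirectly through $D(W) := y \sv W - y \sh W$. Comparing the two defining recurrences shows that, for $W \in \mathfrak{H}^{1} \setminus \{1\}$,
\[
D(xW) = x\, D(W) - x^{2} W, \qquad D(yW) = y\, D(W) - xy\, W,
\]
with base case $D(y) = -2xy$. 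Iterating the first relation to peel off the leading $x^{k_{1}-1}$ of $w$ and then applying the second (or the base case when $n=1$), an induction on $n$ yields
\[
y \sv w - y \sh w = -\sum_{i=1}^{n} (k_{i} + \delta_{ni})\, z_{k_{1}} \cdots z_{k_{i}+1} \cdots z_{k_{n}}.
\]

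Combining this with the classical shuffle expansion
\[
y \sh w = \sum_{i=0}^{n} z_{k_{1}} \cdots z_{k_{i}} z_{1} z_{k_{i+1}} \cdots z_{k_{n}} \;+\; \sum_{\substack{1 \le i \le n \\ k_{i} \ge 2}} \sum_{p=1}^{k_{i}-1} z_{k_{1}} \cdots z_{p+1} z_{k_{i}-p} \cdots z_{k_{n}},
\]
the $z_{1}$-insertion terms cancel in the difference, so $y \hv w - y \sv w \in \mathfrak{H}^{0}$ and equals
\[
\sum_{i=1}^{n} (k_{i} - 1 + \delta_{ni})\, z_{k_{1}} \cdots z_{k_{i}+1} \cdots z_{k_{n}} \;-\; \sum_{\substack{1 \le i \le n \\ k_{i} \ge 2}} \sum_{p=1}^{k_{i}-1} z_{k_{1}} \cdots z_{p+1} z_{k_{i}-p} \cdots z_{k_{n}}.
\]
Applying $\overline{Z}$ to both sides and using the vanishing from Theorem~\ref{2.12} (together with $\overline{Z}^{\hv}|_{\mathfrak{H}^{0}} = \overline{Z}$), the reindexing $j = k_{i} - 1 - p$ converts $z_{p+1} z_{k_{i}-p}$ into $z_{k_{i}-j} z_{j+1}$ for $j = 0, \ldots, k_{i}-2$, yielding exactly Theorem~\ref{3.3}.

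The main obstacle is computing $D(w) = y \sv w - y \sh w$, since the $\delta$-correction terms in the $\sv$-recursion must be tracked carefully through the peeling of each block. The crucial observation is that $D(xW)$ picks up only a single correction $-x^{2} W$, so peeling the leading $x^{k_{i}-1}$ of a block produces the clean coefficient $k_{i} - 1$ in front of $z_{\ldots, k_{i}+1, \ldots}$; the extra $+\delta_{ni}$ appearing only at the final block arises from the base case $D(y) = -2xy$, in which two copies of $-xy$ appear because both $\delta(w_{1})$ and $\delta(w_{2})$ are triggered when $u_{1} = u_{2} = y$ and $w_{1} = w_{2} = 1$.
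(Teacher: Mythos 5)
Your proof is correct, and its overall architecture coincides with the paper's: both apply Theorem~\ref{2.12} with $w_{1}=y$, $w_{2}=z_{k_{1}}\cdots z_{k_{n}}$, observe that $y \sv w - y \hv w$ lands in $\mathfrak{H}^{0}$ so that $\overline{Z}^{\hv}$ restricts to $\overline{Z}$ there, and read off the relation. The difference lies in how the expansion of $y \sv w$ is obtained. The paper proves it directly as Lemma~\ref{3.2}: a case analysis for $y \sv z_{k}w$ followed by induction on $n$, producing the full three-part formula (\ref{eq:3.1}) in one piece. You instead isolate the deviation of $\sv$ from $\sh$ into the operator $D(W)=y\sv W - y\sh W$, verify the clean letter-by-letter recursions $D(xW)=xD(W)-x^{2}W$ and $D(yW)=yD(W)-xyW$ with base case $D(y)=-2xy$, deduce $D(w)=-\sum_{i}(k_{i}+\delta_{ni})z_{k_{1}}\cdots z_{k_{i}+1}\cdots z_{k_{n}}$, and then import the classical Hoffman shuffle expansion of $y\sh w$; I checked the recursions against the definition of $\sv$ and the resulting expansion agrees with (\ref{eq:3.1}) after the reindexing $j=k_{i}-1-p$. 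Your route buys a conceptual dividend: it makes transparent that the $\delta$-correction terms in the definition of $\sv$ are solely responsible for replacing the coefficient $1$ in Hoffman's relation by $k_{i}-1+\delta_{ni}$, and it reuses the known MZV computation rather than redoing it; the paper's direct induction is self-contained and avoids introducing the auxiliary operator. Both are complete proofs of the same combinatorial identity.
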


We first prove the following lemma. 
\begin{lem}\label{3.2}
$(i)$ Let $w$ be a word in $\mathfrak{H}$ and let k a positive integer. Then we have 
\begin{equation*}
y \sv z_{k} w = \begin{cases}
                \displaystyle z_{1} z_{k} + \sum_{j=0}^{k-2} z_{k-j} z_{j+1} - (k+1) z_{k+1} + z_{k} z_{1} 
                & \text{$( w =1 )$}, \\
                \displaystyle z_{1} z_{k} w + \sum_{j=0}^{k-2} z_{k-j} z_{j+1} w - k z_{k+1} w + z_{k} (y \sv w) 
                & \text{$( w \neq 1 )$} ,
                \end{cases}
\end{equation*}
where the summation is treated as $0$ when $k=1$. 

$(ii)$ For $k_{1}$, $k_{2}$, $\ldots$, $k_{n} \in \mathbb{Z}_{\geq 1}$, we have 
\begin{align}
y \sv z_{ k_{1} }  z_{ k_{2} } \cdots  z_{ k_{n} } 
 &= \sum_{i=0}^{n}  z_{ k_{1} }  \cdots z_{ k_{i} } z_{1} z_{ k_{i+1} } \cdots z_{ k_{n} } \nonumber\\
 & \quad + \sum_{ \begin{subarray}{c} 1 \leq i \leq n \\ k_{i} \geq 2 \end{subarray} }
  \sum_{j=0}^{ k_{i} - 2 } z_{ k_{1} } \cdots z_{ k_{i-1} } z_{ k_{i} - j} z_{j+1}  z_{ k_{i+1} } \cdots z_{ k_{n} }
   \label{eq:3.1}\\
 & \quad - \sum_{i=1}^{n} ( k_{i} + \delta_{ni} )
   z_{ k_{1} }  \cdots z_{ k_{i-1} } z_{ k_{i}+1 } z_{ k_{i+1} } \cdots z_{ k_{n} }. \nonumber
\end{align}
\end{lem}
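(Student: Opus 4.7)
The plan is to prove part (i) by induction on $k$, and then derive part (ii) by induction on $n$ using (i). The only tools required are the defining recursion of $\sv$, the identity $z_k = x^{k-1}y$, and the observation $x \cdot z_j = z_{j+1}$.

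For part (i), I would handle $k = 1$ as the base case by a direct expansion: applying the $\sv$ recursion with $u_1 = u_2 = y$, $w_1 = 1$, $w_2 = w$ gives
\[ y \sv z_1 w = z_1^{2} w + z_1 (y \sv w) - z_2 w - \delta(w)\, z_2, \]
and the $\delta(w)\,z_2$ term accounts exactly for the difference between the $w = 1$ and $w \neq 1$ cases of the formula. For $k \geq 2$, I would write $z_k w = x \cdot (z_{k-1}w)$ and apply the $\sv$ recursion with $u_1 = y$, $w_1 = 1$, $u_2 = x$, $w_2 = z_{k-1}w$; the boundary term $\delta(z_{k-1}w)\tau(x)y$ vanishes because $z_{k-1}w$ contains a $y$, giving the clean recursion
\[ y \sv z_k w = z_1 z_k w + x (y \sv z_{k-1}w) - z_{k+1} w. \]
Unrolling this recursion $k - 1$ times down to the base case, and using $x^j z_1 = z_{j+1}$ and $x^j z_2 = z_{j+2}$, produces the claimed formula in both cases (a short reindexing identifies $\sum_{j=0}^{k-2} z_{j+1} z_{k-j}w + z_k z_1 w$ with $z_1 z_k w + \sum_{j=0}^{k-2} z_{k-j} z_{j+1}w$).

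For part (ii), the base case $n = 1$ is the $w = 1$ case of (i), after noting that for $n = 1$ the $i = 1 = n$ summand of the third sum carries coefficient $k_1 + \delta_{11} = k_1 + 1$. For the inductive step, set $w = z_{k_2} \cdots z_{k_n}$, which is not $1$ since $n \geq 2$, and invoke the $w \neq 1$ case of (i):
\[ y \sv z_{k_1} \cdots z_{k_n} = z_1 z_{k_1} w + \sum_{j=0}^{k_1 - 2} z_{k_1 - j} z_{j+1} w - k_1 z_{k_1 + 1} w + z_{k_1} (y \sv w). \]
Applying the induction hypothesis to $y \sv w$, left-multiplying by $z_{k_1}$, and reindexing $i \mapsto i + 1$ in each of the three resulting sums, everything combines with the four explicit terms above to yield the three sums of (\ref{eq:3.1}).

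The main obstacle is tracking the coefficient $k_i + \delta_{ni}$ in the third sum. One must verify two things: first, that the explicit term $-k_1 z_{k_1+1}w$ from (i) accounts for the $i = 1$ summand in the inductive step with the correct coefficient $k_1 + \delta_{n,1} = k_1$ (valid since $n \geq 2$); second, that the induction hypothesis contributes $-\sum_{i=2}^{n}(k_i + \delta_{n-1, i-1}) \cdots$ to the shifted third sum, and that $\delta_{n-1, i-1} = \delta_{n, i}$, so that the assembled third sum has the correct coefficient $k_i + \delta_{ni}$ for every $i$. No individual step is technically deep, but the combinatorial bookkeeping across the three sums must be carried out carefully.
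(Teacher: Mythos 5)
Your proposal is correct and follows essentially the same route as the paper: part (i) by induction on $k$ via the defining recursion of $\sv$ (with the base case $k=1$ producing the $\delta(w)$ boundary term), and part (ii) by induction on $n$, peeling off $z_{k_1}$ with (i) and reassembling the three sums. You in fact supply more detail than the paper, whose proof of (i) is left as a one-line remark, and your bookkeeping of the coefficient $k_i+\delta_{ni}$ is accurate.
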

\begin{proof}
(i) The case $k=1$ is clear from the definition of $\sv$. And we can prove the case $k \geq 2$ by induction on $k$. 
(ii) We prove the assertion by induction on $n$. The case $n=1$ follows from (i). 
We assume that the assertion is true for $n-1$. 
Using (i), we obtain 
\begin{align*}
 y \sv z_{k_{1}} z_{k_{2}} \cdots z_{k_{n}} 
 &= z_{1} z_{k_{1}} z_{k_{2}} \cdots z_{k_{n}} 
   + \sum_{j=0}^{k_{1}-2} z_{ k_{1}-j } z_{j+1} z_{k_{2}} \cdots z_{k_{n}} \\
 & \quad  - k_{1} z_{ k_{1}+1 } z_{k_{2}} \cdots z_{k_{n}} 
  + z_{ k_{1} } (y \sv z_{k_{2}} \cdots z_{k_{n}}) . 
\end{align*}
By the induction hypothesis, this expression equals the right hand side of (\ref{eq:3.1}). 
\end{proof}

\begin{proof}[Proof of Theorem $\ref{3.3}$]
By Lemma \ref{3.2} and the definition of $\hv$, we have 
\begin{align*}
\lefteqn{ y \sv z_{ k_{1} } z_{ k_{2} } \cdots z_{ k_{n} } 
         -   y \hv z_{ k_{1} } z_{ k_{2} } \cdots z_{ k_{n} } } \\
 &= \sum_{ \begin{subarray}{c} 1 \leq i \leq n \\ k_{i} \geq 2 \end{subarray} }
     \sum_{j=0}^{ k_{i} - 2 } z_{ k_{1} } \cdots z_{ k_{i-1} } z_{ k_{i} - j} z_{j+1}  z_{ k_{i+1} } \cdots z_{ k_{n} } \\
 & \quad - \sum_{i=1}^{n} ( k_{i} + \delta_{ni}-1 )
   z_{ k_{1} }  \cdots z_{ k_{i-1} } z_{ k_{i}+1 } z_{ k_{i+1} } \cdots z_{ k_{n} } .
\end{align*}
The right hand side is contained in $\mathfrak{H}^0$ by $k_{1} \geq 2$. 
Therefore, the assertion follows from Theorem \ref{2.12}. 
\end{proof}

\textsc{Graduate School of Mathematics, Kyushu University}

\textsc{Fukuoka 812-8581, Japan}

\textit{E-mail address}: muneta@math.kyushu-u.ac.jp


\end{document}